\documentclass[12pt,reqno]{amsart}

\setlength{\textheight}{23cm}
\setlength{\textwidth}{16cm}
\setlength{\topmargin}{-0.8cm}
\setlength{\parskip}{0.3\baselineskip}
\hoffset=-1.4cm

\usepackage{hyperref}

\usepackage[all]{xy}
\usepackage{amssymb}
\usepackage{amsmath}
\usepackage{amsthm}
\usepackage{amsfonts}

\xymatrixcolsep{2cm}

\numberwithin{equation}{section}

\allowdisplaybreaks[1]

\usepackage{etoolbox}

\makeatletter

\patchcmd{\thesubsection}{\arabic}{\arabic}{}{}

\patchcmd{\@seccntformat}{\@secnumfont}{%
\@secnumfont\expandafter\protect\csname format#1\endcsname}{}{}

\patchcmd{\@startsection}{\@afterindenttrue}{\@afterindentfalse}{}{}

\patchcmd{\subsection}{-.5em}{.3\linespacing}{}{}

\makeatother

\theoremstyle{plain}

\newtheorem{theorem}{Theorem}[section]

\newtheorem{proposition}[theorem]{Proposition}

\newtheorem{corollary}[theorem]{Corollary}

\theoremstyle{remark}

\newtheorem{remark}[theorem]{Remark}

\newcommand{\Ker}[1]{\ensuremath{\mathrm{Ker} (#1)}}

\newcommand{\cat}[1]{\ensuremath{\mathcal{#1}}}

\newcommand{\at}[2][]{\ensuremath{\mathrm{at}_{#1} (#2)}}

\newcommand{\END}[2][]{\ensuremath{\mathcal{E}\mathit{nd}_{#1} (#2)}}

\newcommand{\id}[1]{\ensuremath{\mathbf{1}_{#1}}}

\newcommand{\rk}[2][]{\ensuremath{\mathrm{rk}_{#1}(#2)}}

\newcommand{\C}{\ensuremath{\mathbb{C}}}

\newcommand{\struct}[1]{\ensuremath{\mathcal{O}_{#1}}}

\newcommand{\sstalk}[2]{\ensuremath{\struct{#1,\,#2}}}

\newcommand{\HOM}[3][]{%
\ensuremath{\mathcal{H}\mathit{om}_{#1}(#2,\, #3)}}

\newcommand{\DIFF}[4][]{\ensuremath{\mathcal{D}\mathit{iff}^{#1}_{#2}(#3,\,#4)}}

\newcommand{\Diff}[4][]{\ensuremath{\mathrm{Diff}^{#1}_{#2}(#3,\,#4)}}

\newcommand{\coh}[3]{\ensuremath{\mathrm{H}^{#1}(#2,\,#3)}}

\baselineskip=15.5pt

\begin{document}

\title[On the relative opers in dimension one]{On the relative opers  in dimension one}

\author[A. Singh]{Anoop Singh}

\address{School of Mathematics, Tata Institute of Fundamental Research,
Homi Bhabha Road, Mumbai 400005, India}

\email{anoops@math.tifr.res.in}

\author[A. Upadhyay]{Abhitosh Upadhyay}

\address{School of Mathematics and Computer Science, Indian Institute of Technology, Goa, 403401, India}

\email{abhitosh@iitgoa.ac.in}

\subjclass[2010]{32C38, 14F10, 53C07}

\keywords{Relative oper, Relative differential operator, Relative jet bundle, Relative holomorphic connection.}

\begin{abstract}
We investigate the relative opers over the complex analytic  family of compact 
complex manifolds of relative dimension one. We introduce the notion of relative opers arising from the second fundamental form associated with a relative holomorphic connection. We also investigate the relative differential operators over the complex analytic family of compact complex manifolds whose symbol is the identity automorphism. We show that the set of equivalent relative opers arising from the second fundamental form 
is in bijective correspondence with the set of equivalent relative differential operators whose symbol is the identity automorphism.

\end{abstract}

\maketitle
\section{Introduction}\label{Intro}

The notion of opers were introduced by Beilinson, and Drinfeld in \cite{BD1}, \cite{BD}. In fact the germ of this notion was already introduced  in the work of Drinfeld and Sokolov in \cite{DS1},\cite{DS2}.
Since then there have been lot of study on this,
especially in the realm of mathematical physics.
There are  certain opers arising naturally as limits of Higgs bundles in the Hitchin components \cite{M}. 
Also, there is a profound applications of opers to the geometric Langlands program \cite{BF}, \cite{FG}.
Moreover, in \cite{B1}, Biswas introduced the notion of coupled connection over a compact Riemann surface, which is nothing but the $\text{GL}(n, \C)$-opers arising from the second fundamental form associated with a holomorphic connection.  

Motivated by these, in this article we introduce the notion of relative opers or 
relative $\text{GL}(n,\C)$-opers over the complex analytic family of compact complex manifolds, and generalise results from \cite{B1} in the relative context. For the theory of complex analytic family of compact complex manifolds see \cite{KS1}.
The generalisation  in the relative setup is important because the relative opers may correspond 
to the relative projective structures on the family of compact Riemann surfaces, as this correspondence holds for the absolute setup, that is, there is a  correspondence between $\text{PGL}(2, \C)$ opers and projective structures.
For the  relative projective 
structures see \cite[Section 7]{AB}.  In addition, it would  not be very difficult to show that the relative 
$\text{SL}(2)$-opers gives rise to relative projective 
structures as defined in  \cite[Section 7]{AB}.  Moreover, the space of  differential operators plays a crucial role while 
establishing the correspondence between the space of opers and  the space of  projective structures on a compact Riemann surface (see \cite[Section 6, Theorem 6.1]{B1}). Therefore, it is interesting to see firstly  the correspondence between relative opers and relative differential operators.

 A
Complex analytic family of compact complex manifold is equivalent to a surjective holomorphic proper submersion $\pi : X \to S$ between complex manifolds $X$ and $S$. Further we assume that the relative dimension is one, which is same as saying each fibre of $\pi$ is of dimension one, that is, a compact Riemann surface. Many results in this article are also true 
in the higher relative dimension. Therefore, we mention 
explicitly if the relative dimension is $\geq 1$ or exactly $1$.
We work in analytic category.


In section \ref{Rel_oper}, we define a {\it relative oper} over $\pi: X \to S$, the surjective holomorphic proper submersion of relative dimension $1$. A relative oper is also called  a $S$-oper. 
Let $E$ be a holomorphic vector bundle over $X$, and 
$\nabla$ a relative holomorphic connection on $E$.
Let $F$ be a subbundle of $E$. Then, we have the second 
fundamental form $\beta_{X/S}(F, \nabla)$ associated with the relative holomorphic connection $\nabla$ and subbundle $F$ (see subsection \ref{SFF}).
The second fundamental form $\beta_{X/S}(F, \nabla)$
gives a filtration \eqref{eq:1} of $E$ by subbundles of $E$ starting from $F$, see Proposition \ref{prop:1}. 
We also define the {\it relative oper associated with the second fundamental form} $\beta_{X/S}(F, \nabla)$.
There is a natural notion of equivalence of two relative opers, and we consider the set $\mathfrak{Op}^{SFF}_k(X/S)$  of equivalent relative opers associated with the second fundamental form, where $k$ is a positive integer and stands for the length of the filtration.

In section \ref{Rel_jet}, we recall the definitions of 
relative jet bundle, relative differential operator and 
relative holomorphic connection. We also state some results from \cite{BS} in the relative context necessary to prove theorems in the subsequent sections.

In section \ref{Rel_op_iso}, we construct the relative opers arising from the differential operators whose symbol is an identity automorphism. The first thing is to construct a relative holomorphic connection from the above mentioned differential operators, 
more precisely we prove the following (see Proposition \ref{prop:3}).
\begin{proposition}
\label{prop:0.1}
Let $\pi: X \to S$ be a surjective  holomorphic proper
submersion of complex manifolds of relative dimension $\geq 1$. For $k \geq 1$,
let $$P : E\rightarrow Sym^k\Omega^1_{X/S}\otimes E$$ be a relative differential operator of order $k$ with symbol $$\sigma_k(P) = \id{E} \in \coh{0}{X}{Sym^k \cat{T}_{X/S}\otimes \END[\struct{X}]{E}},$$  the identity isomorphism of $E$, where $\sigma_k$ is in \eqref{eq:6}.   Then, $P$ induces a relative holomorphic connection $\nabla_{P}$ on $(k - 1)$-th relative jet bundle $J^{k-1}_{X/S}(E)$ associated with $E$.
\end{proposition}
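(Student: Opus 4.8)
The plan is to realize $\nabla_P$ as an $\struct{X}$-linear splitting of the first-order relative jet sequence of the bundle $V := J^{k-1}_{X/S}(E)$, assembled from a tautological lift supplied by $P$ together with the canonical comparison map between iterated relative jets. First I would record the two relative jet sequences recalled in Section \ref{Rel_jet} (the relative analogues of the sequences in \cite{BS}): the order-$k$ sequence for $E$,
\begin{equation}
0 \longrightarrow Sym^k\Omega^1_{X/S}\otimes E \xrightarrow{\ \iota_k\ } J^k_{X/S}(E) \xrightarrow{\ \pi_k\ } J^{k-1}_{X/S}(E) \longrightarrow 0 ,
\end{equation}
and the order-one sequence for $V$,
\begin{equation}
0 \longrightarrow \Omega^1_{X/S}\otimes V \xrightarrow{\ \iota_1\ } J^1_{X/S}(V) \xrightarrow{\ p\ } V \longrightarrow 0 .
\end{equation}
I would also recall that a relative holomorphic connection on $V$ is the same datum as an $\struct{X}$-linear splitting $s\colon V\to J^1_{X/S}(V)$ of $p$, the connection being $\nabla(v)=j^1(v)-s(v)$, where $j^1\colon V\to J^1_{X/S}(V)$ is the tautological first-order relative operator.

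Next I would reinterpret the hypothesis on $P$. By the universal property of relative jets, $P$ corresponds to an $\struct{X}$-linear bundle map $\tilde P\colon J^k_{X/S}(E)\to Sym^k\Omega^1_{X/S}\otimes E$, and through the symbol map \eqref{eq:6} the condition $\sigma_k(P)=\id E$ is precisely the statement that $\tilde P$ restricts to the identity on the subbundle $Sym^k\Omega^1_{X/S}\otimes E$, i.e. $\tilde P\circ\iota_k=\id{Sym^k\Omega^1_{X/S}\otimes E}$. Hence $\tilde P$ is a left splitting of the first sequence, so that $J^k_{X/S}(E)=\iota_k\!\left(Sym^k\Omega^1_{X/S}\otimes E\right)\oplus\Ker{\tilde P}$ and $\pi_k$ restricts to an isomorphism $\Ker{\tilde P}\xrightarrow{\sim}J^{k-1}_{X/S}(E)$. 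Inverting this yields a canonical $\struct{X}$-linear lift $\lambda\colon V\to J^k_{X/S}(E)$ with $\pi_k\circ\lambda=\id V$ and $\tilde P\circ\lambda=0$.

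The final step is to bring in the canonical comparison homomorphism $\phi\colon J^k_{X/S}(E)\to J^1_{X/S}(V)$, determined on jets of local sections by $\phi(j^k(e))=j^1(j^{k-1}(e))$; it is $\struct{X}$-linear and satisfies $p\circ\phi=\pi_k$, since the $k$-jet of a section governs the $1$-jet of its $(k-1)$-jet. Setting $s:=\phi\circ\lambda\colon V\to J^1_{X/S}(V)$, we obtain $p\circ s=p\circ\phi\circ\lambda=\pi_k\circ\lambda=\id V$, so $s$ splits the second sequence, and I define $\nabla_P$ to be the associated relative holomorphic connection. The Leibniz rule is then automatic: for a local section $v$ of $V$ and a local function $f$ on $X$, $\nabla_P(fv)=j^1(fv)-s(fv)=f\bigl(j^1(v)-s(v)\bigr)+(\diff f)\otimes v$, using that $j^1$ is the tautological operator and that $s$ is $\struct{X}$-linear.

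The main obstacle is the clean construction of the comparison map $\phi$ in the relative analytic setting together with the compatibility $p\circ\phi=\pi_k$; this rests on the functoriality and the exact sequences for relative jet bundles, and is exactly where the relative reformulation of \cite{BS} does the work. Everything else is formal once the relative jet bundles are known to be locally free with the stated exact sequences---valid here because $\pi$ is a proper submersion---so that the identity symbol splits the order-$k$ sequence and a splitting of the first-order sequence yields a connection. Note that the argument never uses the relative dimension beyond the existence of these sequences, in agreement with the hypothesis of relative dimension $\geq 1$.
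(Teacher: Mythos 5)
Your proposal is correct and follows essentially the same route as the paper's proof of Proposition \ref{prop:3}: your $\tilde P$ and $\lambda$ are the paper's $\phi_P$ and $\Psi_P$, your comparison map $\phi$ is the canonical map $\theta$ of \eqref{eq:4} fitting into the diagram \eqref{cd:4}, and the composite $\phi\circ\lambda$ splitting the first-order jet sequence of $J^{k-1}_{X/S}(E)$ is exactly how the paper produces $\nabla_P$. The only cosmetic difference is that you extract the connection as $j^1-s$ with an explicit Leibniz check, whereas the paper passes to the complementary projection $\chi_P$ and invokes the identification of first-order operators with identity symbol as relative connections.
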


In view of above Proposition \ref{prop:0.1}, we conclude the following result (see Corollary \ref{cor:1}) which is restatement of the Theorem \ref{thm:1} 
\begin{theorem}
\label{thm:0.1}
Let $\pi: X \to S$ be a surjective  holomorphic proper
submersion of complex manifolds of relative dimension $
1$. For $k \geq 1$,
let $P : E\rightarrow Sym^k\Omega^1_{X/S}\otimes E$ be a 
relative differential operator of order $k$ with  symbol  
as identity morphism $\id{E}$ of $E$. Then the triple 
$(J^{k-1}_{X/S}(E), \nabla_P, \{\cat{K}_i\})$ is a relative oper associated to the second fundamental form
$\beta_{X/S}(Sym^{k-1}\Omega^1_{X/S}\otimes E, \nabla_P)$, where $\nabla_P$ is the relative holomorphic connection on $J^{k-1}_{X/S}(E)$ arising from $P$ in Proposition \ref{prop:0.1}.
\end{theorem}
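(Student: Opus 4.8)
The plan is to match the filtration $\{\mathcal{K}_i\}$ produced by the second fundamental form with the canonical filtration of the relative jet bundle, and then to read the oper conditions off the jet symbol sequences. Write $V = J^{k-1}_{X/S}(E)$ and regard $F = Sym^{k-1}\Omega^1_{X/S}\otimes E$ as a subbundle of $V$ via the canonical inclusion in $0 \to Sym^{k-1}\Omega^1_{X/S}\otimes E \to J^{k-1}_{X/S}(E) \to J^{k-2}_{X/S}(E) \to 0$. For $1 \le i \le k$ put $\mathcal{K}_i = \Ker{J^{k-1}_{X/S}(E) \to J^{k-1-i}_{X/S}(E)}$, the kernel of the truncation morphism. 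These are subbundles with $\mathcal{K}_1 = F$ and $\mathcal{K}_k = V$, and the jet symbol sequences give canonical identifications $\mathcal{K}_i/\mathcal{K}_{i-1} \cong Sym^{k-i}\Omega^1_{X/S}\otimes E$. The whole argument rests on the observation that, since the relative dimension is one, $\Omega^1_{X/S}$ is a line bundle and $Sym^j\Omega^1_{X/S} = (\Omega^1_{X/S})^{\otimes j}$, so the natural arrow $Sym^{k-i}\Omega^1_{X/S}\otimes E \to \Omega^1_{X/S}\otimes Sym^{k-i-1}\Omega^1_{X/S}\otimes E$ is in fact a canonical isomorphism.

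First I would unwind the construction of $\nabla_P$ from Proposition \ref{prop:3} and record its interaction with the truncations. Recall that $\nabla_P$ is assembled from the universal order-$(k-1)$ operator together with the splitting of $0 \to Sym^k\Omega^1_{X/S}\otimes E \to J^k_{X/S}(E) \to J^{k-1}_{X/S}(E) \to 0$ determined by the bundle map $\tilde P$ associated with $P$, whose restriction to $Sym^k\Omega^1_{X/S}\otimes E$ is the identity precisely because $\sigma_k(P) = \id{E}$. I would show that $\nabla_P$ satisfies the transversality property $\nabla_P(\mathcal{K}_i) \subseteq \Omega^1_{X/S}\otimes \mathcal{K}_{i+1}$ and that the $\struct{X}$-linear map it induces on graded quotients,
\begin{equation*}
\bar{\beta}_i : \mathcal{K}_i/\mathcal{K}_{i-1} \longrightarrow \Omega^1_{X/S}\otimes(\mathcal{K}_{i+1}/\mathcal{K}_i),
\end{equation*}
is exactly the natural arrow above, hence an isomorphism. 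A local computation in a relative coordinate $z$, in which $\nabla_P$ takes the companion form whose subdiagonal entries are contraction against $dz$ and whose bottom entries are supplied by $P$, makes these identifications explicit and shows they are independent of the coordinate, so that they glue.

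With this in hand I would verify that the filtration of Proposition \ref{prop:1} generated by $\beta_{X/S}(F, \nabla_P)$ coincides with $\{\mathcal{K}_i\}$: starting from $\mathcal{K}_1 = F$, the isomorphism $\bar{\beta}_1$ enlarges $F$ to exactly $\mathcal{K}_2$, and iterating $\bar{\beta}_i$ reproduces each $\mathcal{K}_{i+1}$, terminating at $\mathcal{K}_k = V$. This simultaneously identifies the filtration and establishes that the successive second fundamental forms are isomorphisms, which are precisely the requirements in the definition of a relative oper associated with the second fundamental form; since a relative holomorphic connection on a family of relative dimension one is automatically flat ($\Omega^2_{X/S} = 0$), no further integrability condition needs to be checked. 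The main obstacle I anticipate is the bookkeeping of the middle step: extracting from the abstract construction in Proposition \ref{prop:3} how $\nabla_P$ acts on the jet filtration, and confirming that the induced graded maps are the canonical symbol isomorphisms rather than merely isomorphisms up to some unknown automorphism. This is exactly where the hypothesis $\sigma_k(P) = \id{E}$ is indispensable and where one-dimensionality of the fibres turns the symbol maps into isomorphisms.
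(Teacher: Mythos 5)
Your proposal is correct and follows essentially the same route as the paper: identify $\{\mathcal{K}_i\}$ as the kernels of the jet truncations with graded pieces $Sym^{k-i}\Omega^1_{X/S}\otimes E$, show this coincides with the filtration generated by $\beta_{X/S}(Sym^{k-1}\Omega^1_{X/S}\otimes E,\nabla_P)$, and check that the induced maps on successive quotients are the canonical isomorphisms supplied by the symbol sequences, using $\sigma_k(P)=\id{E}$ and the fact that $\Omega^1_{X/S}$ is a line bundle. The only (immaterial) difference is that where you propose a local companion-form computation to see that the graded second fundamental forms are the canonical identifications, the paper verifies this globally by a diagram chase involving $\Psi_P$ and the jet projections.
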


By a triple
$(E, P, \sigma_k(P) = \id{E})$, we mean that relative  differential operators of order $k$ from 
$E$ to $Sym^k \Omega^1_{X/S} \otimes E$ whose symbol
is the identity automorphism $\id{E}$ of $E$.
Again there is a natural notion for the equivalence of such triples (see end of the section \ref{Rel_op_iso}).
Let $\mathfrak{Diff}_k(X/S)$ be the set of equivalent triples of the form $(E, P, \sigma_k(P) = \id{E})$.

Now, in view of Theorem \ref{thm:0.1}, we get a map (see \eqref{eq:17})
\begin{equation}
\label{eq:0.1}
\Upsilon : \mathfrak{Diff}_k(X/S) \longrightarrow \mathfrak{Op}^{SFF}_k(X/S)
\end{equation}
defined by sending 
$(E, P, \sigma_k(P) = \id{E})$ to $(J^{k-1}_{X/S}(E), \nabla_P, \{\cat{K}_i\})$.
We will show that $\Upsilon$ is a bijective map (see Theorem \ref{thm:3}).

In the last section \ref{Bij}, we show the bijective correspondence between $\mathfrak{Diff}_k(X/S)$ and  $\mathfrak{Op}^{SFF}_k(X/S)$ by constructing an inverse map of $\Upsilon$. In particular, we show the following
(see Theorem \ref{thm:2})

\begin{theorem}
 \label{thm:0.2}
Let $\pi: X \to S$ be a surjective  holomorphic proper
submersion of complex manifolds of relative dimension $
1$. Let $(E, \nabla, E^F_{\bullet})$ be the relative oper associated to the 
second fundamental form $\beta_{X/S}(F, \nabla)$.
Then, there exists a relative differential operator
\begin{equation}
\label{eq:0.18}
P_{\nabla} : \cat{Q} \longrightarrow Sym^k \Omega^1_{X/S} \otimes \cat{Q}.
\end{equation}  
of order $k$ such that $\sigma_k (P_\nabla) = \id{\cat{Q}}$, where $\cat{Q}= E/F_{k-1}$, and $F_{k-1}$
is the last subbundle in the filtration $E^F_{\bullet}$
in \eqref{eq:1}.
 \end{theorem}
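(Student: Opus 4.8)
The plan is to recover the jet-bundle structure hidden inside the oper and then read off $P_\nabla$ from the connection $\nabla$. First I would recall from Section \ref{Rel_jet} that giving a relative differential operator of order $k$ from $\cat{Q}$ to $Sym^k\Omega^1_{X/S}\otimes\cat{Q}$ is the same as giving an $\struct{X}$-linear bundle map $J^k_{X/S}(\cat{Q})\to Sym^k\Omega^1_{X/S}\otimes\cat{Q}$, and that the requirement $\sigma_k(P_\nabla)=\id{\cat{Q}}$ (with $\sigma_k$ as in \eqref{eq:6}) says precisely that this map restricts to the identity on the subbundle $Sym^k\Omega^1_{X/S}\otimes\cat{Q}\subset J^k_{X/S}(\cat{Q})$, i.e. that it splits the order-$k$ symbol sequence. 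So the goal reduces to producing such a splitting canonically out of the oper $(E,\nabla,E^F_{\bullet})$. Since this is exactly the reverse of the passage $P\mapsto\nabla_P$ of Proposition \ref{prop:0.1} and Theorem \ref{thm:0.1}, the conceptual core is to reconstruct an isomorphism $E\cong J^{k-1}_{X/S}(\cat{Q})$ from the oper data.

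Next I would exploit the oper hypothesis itself. The second fundamental form $\beta_{X/S}(F,\nabla)$ and the filtration \eqref{eq:1} produced by Proposition \ref{prop:1} give, by the oper condition, isomorphisms between consecutive graded quotients twisted by $\Omega^1_{X/S}$; iterating these identifies each $F_i/F_{i-1}$ with a tensor power of $\Omega^1_{X/S}$ tensored with $\cat{Q}$, and hence identifies the associated graded of $(E,F_{\bullet})$ with that of $J^{k-1}_{X/S}(\cat{Q})$. I would then upgrade this to an isomorphism of filtered bundles $E\xrightarrow{\sim}J^{k-1}_{X/S}(\cat{Q})$ by prolonging the projection $E\to\cat{Q}$ with $\nabla$: using that a relative connection is automatically flat in relative dimension one (since $\Omega^2_{X/S}=0$), successive covariant derivatives followed by projection to $\cat{Q}$ assemble into a relative differential operator $E\to J^{k-1}_{X/S}(\cat{Q})$ of order $\le k-1$. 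This map is filtration preserving, and by the identification of graded pieces above it is an isomorphism on the associated graded, hence an isomorphism. Transporting $\nabla$ through it yields a relative holomorphic connection on $J^{k-1}_{X/S}(\cat{Q})$.

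With this in hand I would define $P_\nabla$ and verify its order and symbol. Applying the transported connection to the tautological section of $J^{k-1}_{X/S}(\cat{Q})$ and projecting onto the top quotient produces, after the symbol correction, a bundle map $J^k_{X/S}(\cat{Q})\to Sym^k\Omega^1_{X/S}\otimes\cat{Q}$, that is, a relative differential operator $P_\nabla\colon\cat{Q}\to Sym^k\Omega^1_{X/S}\otimes\cat{Q}$ of order $k$ as in \eqref{eq:0.18}. Its symbol $\sigma_k(P_\nabla)$ is determined by the top-degree part of the transported connection, which is the composite of the oper second-fundamental-form isomorphisms along the whole filtration together with the deepest second fundamental form; since each factor is an isomorphism and the graded identifications are chosen so that this composite is normalised, one obtains $\sigma_k(P_\nabla)=\id{\cat{Q}}$.

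I expect the technical heart to be twofold. First, one must show that the prolongation $E\to J^{k-1}_{X/S}(\cat{Q})$ is globally well defined and independent of the local lifts and frames used; this is where the flatness in relative dimension one is essential and where the twists by $\Omega^1_{X/S}$ must be tracked carefully through the iterated identifications. Second, and more delicate, is pinning the symbol to the identity rather than merely to \emph{some} isomorphism of $\cat{Q}$: this uses the oper nondegeneracy in an essential way, namely that the second fundamental forms along the filtration are isomorphisms and not just morphisms, so that their normalised composite is exactly $\id{\cat{Q}}$. Once these two points are settled, the construction of $P_\nabla$ is complete, and it is this operator that will be shown in Section \ref{Bij} to invert the map $\Upsilon$ of \eqref{eq:0.1}.
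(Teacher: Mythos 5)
Your proposal follows essentially the same route as the paper: both reduce the problem to splitting the order-$k$ jet sequence for $\cat{Q}=E/F_{k-1}$, and both obtain that splitting by using flat local sections of $\nabla$ (flatness being automatic in relative dimension one) to build an isomorphism $E\cong J^{k-1}_{X/S}(\cat{Q})$, whose bijectivity is exactly where the oper condition that the second fundamental forms $\alpha_i$ are isomorphisms enters. One small correction: the prolongation $E\to J^{k-1}_{X/S}(\cat{Q})$ must be an $\struct{X}$-linear bundle map --- defined on $v\in E_x$ as the $(k-1)$-jet at $x$ of $p(s_v)$, where $s_v$ is the flat extension of $v$ --- rather than a ``relative differential operator of order $\le k-1$'' as you wrote, since only an order-zero operator can be an isomorphism of filtered bundles.
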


\section{Relative opers}
\label{Rel_oper}
In this section we define the notion of relative oper 
following \cite{BD}.
Let $\pi : X \rightarrow S$ be a surjective holomorphic proper submersion of relative dimension $1$. 
Then the sheaf $\Omega^1_{X/S}$ of relative holomorphic $1$-forms is a locally free sheaf of rank $1$, equipped with a universal $S$-derivation
$$\text{d}_{X/S}: \struct{X} \longrightarrow \Omega^1_{X/S},$$
that is $\text{d}_{X/S}$ is a $\pi^{-1}\struct{S}$- linear map and satisfies the Leibniz rule.

A {\it relative oper} or {\it $S$-oper} is a triple $(E, \nabla, E_\bullet)$ where
\begin{enumerate}
\item $E$ is a holomorphic vector bundle over $X$.
\item $\nabla$ is a relative holomorphic connection on $E$, that is,
$$\nabla : E \rightarrow \Omega^1_{X/S}\otimes E $$
is a $\pi^{-1}\struct{S}$-linear map, which satisfies the Leibnitz identity
$$\nabla(f s) = f \nabla(s) + \text{d}_{X/S}(f) s,$$
where $f$ is a local section of $\struct{X}$ and $s$ is a local section of $E$.
\item $E_\bullet : \hspace{.2 cm} 0 = E_0\subset E_1\subset\cdot\cdot\cdot \subset E_{n-1}\subset E_n = E$ is a filtration by subbundles of $E$ called $S$-oper flag.
\end{enumerate}
These data have to satisfy the following conditions:
\begin{enumerate}
\item $\nabla (E_i)\subset E_{i+1}\otimes \Omega^1_{X/S}$ for $1\leq i\leq n-1$.
\item The induced maps
$$\frac{E_i}{E_{i-1}} \xrightarrow{\nabla} \frac{E_{i+1}}{E_i}\otimes \Omega^1_{X/S}$$
are isomorphism for $1\leq i\leq n-1$.
\end{enumerate}
Given an $S$-oper $(E, \nabla, E_\bullet)$, we denote  $$\cat{Q} = E/E_{n-1}.$$ We define respectively  the degree, type and length of a relative oper $(E, \nabla, E_\bullet)$ as follows
$$\deg (E, \nabla, E_\bullet) := \deg (E),$$
$$\text{type} (E, \nabla, E_\bullet) := \rk{E},$$
$$\text{length} (E, \nabla, E_\bullet) := n.$$

We say that two relative opers $(E, \nabla, E_{\bullet} )$ and $(E', \nabla', E'_{\bullet})$ 
of same length $n$ are 
equivalent if there exists a holomorphic isomorphism 
$$\alpha : E \longrightarrow E'$$
such that the diagram
\begin{equation}
 \label{cd:1}
 \xymatrix{ E\ar[d]^{\alpha} \ar[r]^{\nabla} & E\otimes\Omega^1_{X/S}\ar[d]^{\alpha\otimes\id{\Omega^1_{X/S}}}\\
 E' \ar[r]^{\nabla'}& E'\otimes\Omega^1_{X/S}\\}
 \end{equation}
commutes, and $\alpha$ preserves the filtration, that is, $\alpha (E_i) = E'_i$ for every $1 \leq i \leq n$.

Let $\mathfrak{Op}_n(X/S)$ denote the set of all equivalent relative opers over $X/S$ of length $n$.

\subsection{Second fundamental form (SFF) and relative oper}
\label{SFF}
Let $E\xrightarrow{\varpi} X\xrightarrow{\pi} S$ be a holomorphic vector bundle equipped with a relative holomorphic connection $\nabla$. Let $F$ be a subbundle of $E$. The {\bf second fundamental form} of $F$ with respect to relative holomorphic connection $\nabla$ on $E$ is the following composition
$$F\xrightarrow{\iota} E \xrightarrow{\nabla}\Omega^1_{X/S}\otimes E\xrightarrow{\id{\Omega^1_{X/S}}\otimes q} \Omega_{X/S}^1\otimes (E/F),$$
denoted by $$\beta_{X/S}(F, \nabla) = (\id{\Omega^1_{X/S}}\otimes q)\circ\nabla\circ \iota,$$ where $q : E\longrightarrow E/F$ is the natural projection. In view of the Leibnitz identity, the second fundamental form $\beta_{X/S}(F, \nabla)$ is an $\struct{X}$-linear map.
The following proposition is true for any relative dimension.
\begin{proposition}\label{proposition1}
The second fundamental form $\beta_{X/S}(F, \nabla)$ induces a filtration
\begin{equation}\label{eq:1}
E^{F}_\bullet : 0 := F_0 \subset F_1 := F\subsetneq F_2\subsetneq F_3\subsetneq\cdot\cdot\cdot F_{n-1}\subsetneq F_n\subseteq E,
\end{equation}
of $E$ by subbundles.
\end{proposition}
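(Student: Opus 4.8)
The plan is to build the filtration \eqref{eq:1} inductively, using the second fundamental form of each successive term as the engine that produces the next one. I set $F_1 := F$ and, assuming a subbundle $F_i \subseteq E$ with $F \subseteq F_i$ has already been constructed, I form its second fundamental form
$$\beta_{X/S}(F_i,\nabla) : F_i \longrightarrow \Omega^1_{X/S}\otimes (E/F_i),$$
which by the discussion preceding the proposition is $\struct{X}$-linear. Since the relative dimension is one, $\Omega^1_{X/S}$ is a line bundle, so I may contract with the relative tangent sheaf $\cat{T}_{X/S} = (\Omega^1_{X/S})^{\vee}$ to obtain an $\struct{X}$-linear morphism $\bar{\beta}_i : \cat{T}_{X/S}\otimes F_i \to E/F_i$. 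Writing $q_i : E \to E/F_i$ for the projection and $\cat{G}_i := \Img{\bar{\beta}_i} \subseteq E/F_i$, I define $F_{i+1} := q_i^{-1}(\cat{G}_i)$, so that $F_i \subseteq F_{i+1}$ and $F_{i+1}/F_i = \cat{G}_i$. Because the contraction pairing $\cat{T}_{X/S}\otimes\Omega^1_{X/S} \to \struct{X}$ is canonical, $F_{i+1}$ is independent of any local trivialisation of $\Omega^1_{X/S}$, so the recipe is well defined; in higher relative dimension one contracts $\beta_{X/S}(F_i,\nabla)$ against $\cat{T}_{X/S}$ instead and the same recipe applies.

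The main obstacle is to show that each $F_{i+1}$ is again a subbundle, i.e.\ that $\cat{G}_i$ is a subbundle of $E/F_i$; once this is known, $F_{i+1}$ is an extension of the subbundle $\cat{G}_i$ by the subbundle $F_i$ and is therefore itself a subbundle. The image of an $\struct{X}$-linear map of vector bundles is a subbundle precisely where the map has locally constant rank, which is not automatic over the (at least two-dimensional) total space $X$. I would handle this by passing to the saturation of $\cat{G}_i$ inside $E/F_i$ and arguing fibrewise: for each $s \in S$ the fibre $X_s$ is a compact Riemann surface, on which a torsion-free coherent sheaf is locally free, so the restriction of the saturation to $X_s$ is a subbundle of $(E/F_i)\vert_{X_s}$. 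Properness and flatness of $\pi$, combined with the fibrewise criterion for local freeness and semicontinuity of rank, then upgrade this to the assertion that the saturation is a subbundle of $E/F_i$ over $X$. The delicate step is ensuring that the saturation is flat over $S$ (so that it commutes with restriction to fibres) and that its fibrewise rank is constant; this rank-constancy across the family is where I expect to spend most of the effort.

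Finally I address termination and strictness. By construction $F_{i+1} \supseteq F_i$, with $F_{i+1} = F_i$ exactly when $\beta_{X/S}(F_i,\nabla) = 0$, i.e.\ when $F_i$ is preserved by $\nabla$. As long as this does not occur the inclusion is strict, whence $\rk{F_i} < \rk{F_{i+1}}$, and since $\rk{F_i} \le \rk{E}$ for all $i$ the ranks can increase only finitely often. Hence there is a smallest index $n$ with $F_{n+1} = F_n$; together with $F_0 := 0$ this yields the chain
$$0 = F_0 \subset F_1 = F \subsetneq F_2 \subsetneq \cdots \subsetneq F_{n-1}\subsetneq F_n \subseteq E$$
of \eqref{eq:1}, in which $F_n$ is the smallest $\nabla$-invariant subbundle of $E$ containing $F$ (with $F_n = E$ when $\nabla$ admits no proper invariant subbundle containing $F$). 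This completes the construction of the filtration.
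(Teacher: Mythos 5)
Your construction is the same as the paper's: contract the second fundamental form against $\cat{T}_{X/S}$, take the image in $E/F_i$, enlarge it to the smallest subbundle containing it, pull back to $E$ under $q_i$, iterate, and stop when the rank can no longer grow. The termination argument and your closing observation that $F_n$ is the smallest $\nabla$-invariant subbundle containing $F$ also match what the paper records in Remark \ref{Rem:1}. The genuine gap is in the one step you flag yourself: showing that $\cat{G}_i=\Img{\bar{\beta}_i}$ sits inside a subbundle of $E/F_i$ of the same rank. The paper settles this intrinsically on $X$: it takes the torsion subsheaf $P$ of the cokernel $(E/F_i)/\cat{G}_i$ and defines $F'$ as the preimage of $P$ in $E/F_i$; this saturation is the unique subsheaf of minimal rank containing $\cat{G}_i$ with torsion-free quotient, hence the sought subbundle, and the fibration $\pi$ plays no role. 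Your proposed route instead restricts to the fibres $X_s$ and tries to propagate local freeness across the family via flatness and semicontinuity. That is not merely extra work; it runs into exactly the obstructions you name and do not resolve: the saturation formed on $X$ need not commute with restriction to $X_s$, the rank of $\bar{\beta}_i\vert_{X_s}$ can jump with $s$, and no argument is offered for flatness of the saturation over $S$. As written, the key assertion that $F_{i+1}$ is a subbundle is therefore not established.

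The fix is to drop the fibrewise detour and argue directly on $X$: the preimage in $E/F_i$ of the torsion of $\Coker{\bar{\beta}_i}$ is the minimal subbundle containing $\Img{\bar{\beta}_i}$. With that substitution the rest of your argument goes through and coincides with the paper's proof.
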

\begin{proof}
Since $\beta_{X/S}(F, \nabla)$ is an $\struct{X}$-linear map, we have
$$\beta_{X/S}(F, \nabla)\in H^0(X, \HOM[\struct{X}]{\cat{T}_{X/S}\otimes F}{E/F}).$$
Let $$P \subset\frac{E/F}{\beta_{X/S}(F, \nabla)(\cat{T}_{X/S}\otimes F)}$$ be the torsion part of the cokernel of $\beta_{X/S}(F, \nabla).$ We have the natural projection
$$E/F \longrightarrow \frac{E/F}{\beta_{X/S}(F, \nabla)(\cat{T}_{X/S}\otimes F)},$$
and the inverse image $P$ under the above projection is the unique subbundle of $E/F$ of minimal rank containing the image 
$\beta_{X/S}(F, \nabla)(\cat{T}_{X/S}\otimes F)$.

Denote this subbundle of $E/F$ by $F'$. The inverse image $$q^{-1}(F') =: F_2$$ is the subbundle of $E$ containing $F$.

Now, we replace $F$ by $F_2$, and repeat the above construction, that is, we get second fundamental form $\beta_{X/S}(F_2, \nabla)$ of $F_2$ with respect to $\nabla$, and above process give us a subbundle $F_3$ and so on. Since $E$ is a vector budle of finite rank, the iterated construction of filtration stabilizes.
\end{proof}
\begin{remark}\label{Rem:1}\mbox{We have following observation from above construction.}
\begin{enumerate}
\item
The relative holomorphic connection $\nabla$ maps $F_i$ to $F_{i+1}\otimes\Omega^1_{X/S}$ for every $1 \leq i \leq n-1$.

\item $F$ is preserved by $\nabla$ if and only if $n = 1$.

\item The last subbundle, i.e., $F_n$ is preserved by $\nabla$, if not, we get another subbundle $F_{n+1}$ of $E$.
\item The second fundamental forms for the subbundles $\{F_i\}$ in the filtration (\ref{eq:1})  of $E$ give a homomorphism of vector bundles
\begin{equation}\label{homomorphism}
\alpha_i : F_i/ F_{i-1}\longrightarrow \Omega^1_{X/S}\otimes(F_{i+1}/F_i)
\end{equation}
for each $i = 1, 2, \cdot\cdot\cdot, n-1$, that is,
$\alpha_1$ coincides with $\beta_{X/S}(F, \nabla)$,
$\alpha_2$ coincides with $\beta_{X/S}(F_2, \nabla)$
and so on.

\item The filtration (\ref{eq:1}) may stabilize to a proper subbundle of $E$.
\end{enumerate} 
\end{remark}

The triple $(E, \nabla, E^F_\bullet)$ is called the \textbf{relative oper associated with the second fundamental form} $\beta_{X/S}(F, \nabla)$ if the corresponding filtration in (\ref{eq:1}) has the property that $F_n = E$, that is, filtration does not stabilize to a proper subbundle of $E$ and the homomorphism
$$\alpha_i : F_i/F_{i-1}\rightarrow \Omega^1_{X/S}\otimes (F_{i+1}/F_i)$$
is an isomorphism for all $i = 1, \cdot\cdot\cdot, n-1.$ 

If $S$ is a single point, then $X$ is a compact Riemann surface and in that case 
the triple $(E, \nabla, F)$ is called coupled connection \cite{B1}.

We say that two relative opers $(E_1, \nabla_1, E^{F_1}_{1, \bullet})$ and $(E_2, \nabla_2, E^{F_2}_{2, \bullet} )$ associated to the second fundamental forms
$\beta_{X/S}(F_1, \nabla_1)$ and $\beta_{X/S}(F_2, \nabla_2)$ respectively, are equivalent if 
 there is a holomorphic isomorphism $\varphi : E_1\rightarrow E_2$ such that $\varphi(F_1) = F_2$ and the following diagram
\begin{equation}
 \label{cd:2}
 \xymatrix{ E_1\ar[d]^{\varphi} \ar[r]^{\nabla_1} & E_1\otimes\Omega^1_{X/S}\ar[d]^{\varphi\otimes\id{\Omega^1_{X/S}}}\\
 E_2 \ar[r]^{\nabla_2}& E_2\otimes\Omega^1_{X/S}\\}
 \end{equation}
commutes.

Since $\varphi$ is an isomorphism and it maps $F_1$ onto $F_2$, and above digram \eqref{cd:2} commutes,
we have 
$$\text{length}(E_1, \nabla_1, E^{F_1}_{1, \bullet})
= \text{length}(E_2, \nabla_2, E^{F_2}_{2, \bullet}).$$

Let $\mathfrak{Op}^{SFF}_n(X/S)$ be the set of all equivalent relative opers associated with some second fundamental form. 
Then $\mathfrak{Op}^{SFF}_n(X/S) \subset \mathfrak{Op}_n(X/S)$.
Our aim is to classify  all relative opers associated with some second fundamental form in terms of certain type of 
relative differential operators.

\section{Relative Jet bundles, relative differential operators and relative holomorphic connections}
\label{Rel_jet}
In this section, we recall the notion of relative jet bundles, relative differential operators and relative holomorphic connections on a holomorphic vector bundle.
We also state some results which we will use to show our main 
theorem.

\subsection{Relative Jet bundle}
Let $\pi\,:\, X \,\longrightarrow\, S$ be a surjective  proper submersion of complex manifolds with relative dimension $ l \geq 1$. Also, assume that dimension of 
$X$ is $m$ and dimension of $S$ is $n$. Then, $m-n = l$.
  Let $E \xrightarrow{\varpi} X\xrightarrow{\pi} S$ be a holomorphic vector bundle. 
We define a bundle associated to $E$, called the {\bf relative jet bundle} as follows. Consider the following 
$$
J^1_{X/S}(E) := E\oplus(E\otimes\Omega^1_{X/S})$$
as $\pi^{-1}\struct{S}$-module. We equip $J^1_{X/S}(E)$ with a right $\struct{X}$-module structure
$$
(s, \sigma)\cdot f : = (fs, f\sigma + s\otimes \text{d}_{X/S} f),$$
where $s$ is a local section of $E$, $f$ is a local section of $\struct{X}$ and $\sigma$ is a local section of $E\otimes\Omega^1_{X/S}$. We shall always consider $J^1_{X/S}(E)$ with this right $\struct{X}$-module structure, and call it {\bf first order relative jet bundle} (see \cite{FB}) . This first order relative jet bundle $J^1_{X/S}(E)$ fits into  the following short exact sequence
\begin{equation}
\label{eq:2}
0\longrightarrow E\otimes\Omega^1_{X/S}\longrightarrow J^1_{X/S}(E)\xrightarrow{p_E} E\longrightarrow 0,
\end{equation}
of $\struct{X}$-modules.
Note that the short exact sequence \eqref{eq:2} need not 
be  holomorphically splitting as an $\struct{X}$-modules. We will see that the holomorphic splitting of \eqref{eq:2} is equivalent to the fact that $E$ admits a 
relative holomorphic connection.

We now define higher order  relative jet bundle and describe some of its functorial property. Consider the second order relative jets
$$J^2_{X/S}(E) = J^1_{X/S}(E)\oplus (E\otimes Sym^2\Omega^1_{X/S}) = E\oplus (E\otimes\Omega^1_{X/S}) \oplus (E\otimes Sym^2\Omega^1_{X/S})$$
as $\pi^{-1}\struct{S}$-module, where $Sym^2\Omega^1_{X/S}$ denotes the second symmetric power of $\Omega^1_{X/S}$.

Note that the relative derivation
$d_{X/S} : \struct{X} \to \Omega^1_{X/S}$ induces naturally the quadratic differential $d^{(2)}_{X/S}\,:\, \struct{X} \,\longrightarrow\, Sym^2\Omega^1_{X/S}$.

Now, we express the quadratic differential $d^{(2)}_{X/S}$ in terms of local coordinates. Let $x \in X$ be a point and let
$(U,\, \phi \,=\, (z_1,\,\cdots,\,z_l,\,z_{l+1},\,\cdots,\,z_{l+n}))$ be a holomorphic chart
on $X$ around $x$. Then, $\{dz_i\,\mid\, 1\,\leq\, i \,\leq\, l\}$ is an $\struct{U}$-basis of
$\Omega^1_{X/S}|_U$. Since $\pi:X \to S$ is a holomorphic proper submersion of relative dimension $l \geq 1$, for any holomorphic function 
$f$ on $U$, we have 
\begin{equation}
\label{eq:loc}
d_{X/S}(f) = \sum_{i =1}^l \frac{\partial f}{\partial z_i} d z_{i}.
\end{equation}
Now, using the local basis for $Sym^2\Omega^1_{X/S}$ over the holomorphic chart $(U, \phi)$, we can express  the quadratic differential $d^{(2)}_{X/S}(f)$  in the local co-ordinates $(U, \phi)$ as follows
$$ d^{(2)}_{X/S}(f) = \frac{1}{2!}\sum_{i, j}\frac{\partial^2 f}{\partial z_i\partial z_j}d z_i\odot dz_j,$$
where $\odot$ denotes the symmetric product. 

The right $\struct{X}$-module structure on $J^2_{X/S}(E)$ is defined as follows
$$
(s, \sigma, \tau)\cdot f = (fs, f\sigma + s\otimes \text{d}_{X/S} f, f\tau + \sigma\otimes \text{d} _{X/S} f + s\otimes d^{(2)}_{X/S}f).$$
Here $\sigma\otimes d_{X/S}f$, we mean that the image of $\sigma\otimes d_{X/S}f\in E\otimes\Omega^1_{X/S}\otimes\Omega^1_{X/S}$ in $E\otimes Sym^2 \Omega^1_{X/S}$ under the symmetrization map
$$E\otimes\Omega^1_{X/S}\otimes\Omega^1_{X/S}\longrightarrow E\otimes Sym^2\Omega_X^1.$$ 
It is easy to verify that the right $\struct{X}$-module structure on $J^2_{X/S}(E)$ is independent of the local coordinate system.

Inductively we define $k$-th order relative jets for $k \geq 1$ as follows.
$$J^k_{X/S}(E) := J^{k-1}_{X/S}(E)\oplus (E\otimes Sym^k\Omega^1_{X/S})$$
as $\pi^{-1}\struct{S}$-module and $Sym^k\Omega^1_{X/S}$ denote the $k$-th symmetric powers of $\Omega^1_{X/S}$.

Let $d^{(j)}_{X/S} : \struct{X}\longrightarrow Sym^j\Omega^1_{X/S}$ be the $j$-th order differential induced from the relative derivation $d_{X/S}$. Then, $d^{(j)}_{X/S}(f)$ can be expressed in the local coordinates
$(U, \phi)$ considered above as follows
$$d^{(j)}_{X/S}(f) = \frac{1}{j!}\sum_{i_1, \cdot\cdot\cdot, i_j}\frac{\partial^j f}{\partial z_{i_1}\cdot\cdot\cdot\partial z_{i_j}}dz_{i_1}\odot\cdot\cdot\cdot\odot dz_{i_j}.$$
Let $(s_0, s_1, \cdot\cdot\cdot, s_k)$ be a section of $J^k_{X/S}(E)$ with $s_i$ are local section of $E\otimes Sym^i\Omega^1_{X/S}$ for every $i = 0, \ldots, k$. Then, for any $f$ a local section of $\struct{X}$, we set
$$(s_0, s_1, \cdot\cdot\cdot, s_k)\cdot f = (t_0, t_1, \cdot\cdot\cdot, t_k),$$
where $t_i$ is a local section of $E\otimes Sym^i\Omega^1_{X/S}$ given by the following expression
$$t_i = \sum_{j =0}^i s_j\otimes d_{X/S}^{i -j} f.$$
Also, the right $\struct{X}$-module structure on $J^k_{X/S}(E)$ is independent of the local coordinate system. See \cite{FB} for more details on higher order 
relative jet bundles and higher order differentials.

In view of the definition of higher order relative jets 
$J^k_{X/S}(E)$ associated with $E$, we get 
 an exact sequence 
\begin{equation}\label{eq:3}
0\longrightarrow E\otimes Sym^k\Omega^1_{X/S}\longrightarrow J^k_{X/S}(E)\xrightarrow{p^k_E} J^{k-1}_{X/S}(E)\longrightarrow 0
\end{equation}
of $\struct{X}$-modules, for every $k \geq 1$.
The short exact sequence \eqref{eq:3} in general does not holomorphically split as an $\struct{X}$-modules,
although it splits holomorphically as $\pi^{-1}\struct{S}$-modules.

Let $F$ be another holomorphic vector bundle over $X/S$,
and $\Phi : E \longrightarrow F$ a holomorphic homomorphism. Then
from the above definition of relative jet bundle, it is immediate that $ \Phi : E\longrightarrow F$ induces a homomorphism
$$ J^k_{X/S}(\Phi) : J^k_{X/S}(E)\longrightarrow J_{X/S}^k(F),$$
for each $k \geq 0$ and the corresponding diagram of homomorphisms
\begin{equation}
\label{cd:3}
\xymatrix{
J^{k+1}_{X/S}(E)\ar[d]\ar[r]^{J^{k+1}_{X/S}(\Phi)} & J^{k+1}_{X/S}(F) \ar[d] \\
J^k_{X/S}(E) \ar[r]^{J^{k}_{X/S}(\Phi)} & J^{k}_{X/S}(F)  
}
\end{equation}
is commutative, where the vertical arrows are natural projections defined in \eqref{eq:3}. 

For any integer $k \geq 0$, from the definition of relative jet bundle and injective homomorphism in 
the short exact sequence \eqref{eq:3}
$$E \otimes Sym^k \Omega^1_{X/S} \longrightarrow J^k_{X/S}(E),$$
there is a natural injective homomorphism of vector bundles
\begin{equation}\label{eq:4}
\theta : J^{k+1}_{X/S}(E)\longrightarrow J^1_{X/S}(J^k_{X/S}(E)).
\end{equation}

Note that for $k = 0$, $\theta$ is an isomorphism.
We will explicitly describe $\theta$ for $k =1$. We have the natural projection (see \eqref{eq:2}) $$ p_E : J^1_{X/S}(E)\longrightarrow E.$$ 
The above projection induces a morphism
$$ J^1_{X/S}(p_E) : J^1_{X/S}(J^1_{X/S}(E))\longrightarrow J^1_{X/S}(E).$$
Next, consider the equation (\ref{eq:2}) and replacing $E$ by $J^1_{X/S}(E)$, we get another map
$$ p_{J^1_{X/S}(E)} : J^1_{X/S}(J^1_{X/S}(E))\longrightarrow J^1_{X/S}(E).$$
Note that $J^1_{X/S}(p_E)$ and $ p_{J^1_{X/S}(E)}$ both projects to $E$ under the composition with the projection $p_E : J^1_{X/S}(E)\longrightarrow E$. Therefore,
$$J^1_{X/S}(p_E)- p_{J^1_{X/S}(E)} : J^1_{X/S}(J^1_{X/S}(E))\longrightarrow E\otimes\Omega^1_{X/S}\subseteq J^1_{X/S}(E).$$
Now consider $\theta$ defined in equation (\ref{eq:4}) for $k =1$, then 
$$\text{Im}(\theta) = \Ker{J^1_{X/S}(p_E)- p_{J^1_{X/S}(E)}}.$$
It should be noted that the diagram

\begin{equation}
\label{eq:cd2}
\xymatrix{
J^1_{X/S}(J^{k+1}_{X/S}(E))\ar[d]\ar[r] & J^{k+1}_{X/S}(E) \ar[d] \\
J^1_{X/S}(J^k_{X/S}(E))  & J^{k+1}_{X/S}(E)  \ar[l]_{\theta}
}
\end{equation}
does not commute (unless $E = 0$ or $k = 0$).

\subsection{Relative differential operators}
In this section, we follow \cite{GD} and \cite{R} to  recall the definition of finite order relative differential operators, and symbol map associated with it. 

Let $E$ and $F$ be two vector bundles over $X \xrightarrow{\pi} S$. Let $k \geq 0$ be any integer. A $k$-th order relative differential operator (or $S$-differential operator) is a $\pi^{-1}\struct{S}$-linear homomorphism $$P : E\rightarrow F$$ such that for any open subset $U\subset X$ and for any $f\in \struct{X}(U),$ the bracket
$$[P|_U, f] : E|_U\rightarrow F|_U$$
defined as
$$[P|_U, f]_V (s) = P_V(f|_V s) - f|_V\hspace{.02 cm} P_V(s)$$
is a relative differential operator of order $(k -1)$, for any open subset $V\subset U$, and for all $s\in E(V)$.
For the case $k = 0$, we define a relative differential operator to be an $\struct{X}$-linear map from $E$ to $F$. 

Let $\HOM[S]{E}{F}$ be the sheaf of $\pi^{-1}\struct{S}$-linear morphism from $E$ to $F$. Then $\HOM[S]{E}{F}$ has $\struct{X}$-bimodule structure defined as follows:

For every local sections $f$ of $\struct{X}$, and $P$ of $\HOM[S]{E}{F}$, the $S$-linear morphisms $fP$ and $Pf$ are respectively, given by 
$$f P(\alpha) = f (P(\alpha)) \hspace{.3 cm}\text{and}\hspace{.3 cm} Pf(\alpha) = P(f\alpha),$$
where $\alpha$ is a local section of $E$. The first operation gives the left and second gives the right 
$\struct{X}$-module structure on $\HOM[S]{E}{F}$.
Unless and otherwise stated we always use  left $\struct{X}$-module structure on $\HOM[S]{E}{F}$.

Let $\Diff[k]{S}{E}{F}$ denote the set of all $S$-differential operators from $E$ to $F$ of order $k$. For any open subset $U$ of $X$, the assignment
$$U\,\longmapsto\, \Diff[k]{S}{E|_U}{F|_U}$$
is the sheaf of $S$-differential operators over $X$ of order $k$. This sheaf is denoted by $\DIFF[k]{S}{E}{F}$ and this is an $\struct{X}$-subbimodule of $\HOM[S]{E}{F}$. We have following increasing chain of inclusions of subsheaves
of $\HOM[S]{E}{F}$
$$\HOM[\struct{X}]{E}{F} \subset \DIFF[1]{S}{E}{F}\subset \DIFF[2]{S}{E}{F}\subset \cdots \subset \HOM[S]{E}{F}.$$
From \cite[Proposition 4.2]{BS}, we have 
 the following symbol exact sequence,
\begin{equation}
\label{eq:5}
0 \rightarrow \HOM[\struct{X}]{E}{F} \xrightarrow{\iota} \DIFF[1]{S}{E}{F}\xrightarrow{\sigma_1} \cat{T}_{X/S}\otimes \HOM[\struct{X}]{E}{F}\rightarrow 0,
\end{equation}
where $\sigma_1$ is the symbol map.

The above symbol exact sequence also makes sense for the higher order differential operator (see \cite[Chapter 2, Definition 7.15, Definition 7.18]{R}) and can be given as follows
\begin{equation}
\label{eq:6}
0 \rightarrow \DIFF[k-1]{S}{E}{F} \rightarrow \DIFF[k]{S}{E}{F}\xrightarrow{\sigma_k}Sym^k \cat{T}_{X/S}\otimes \HOM[\struct{X}]{E}{F}\rightarrow 0,
\end{equation}
where $\sigma_k$ denotes the $k$-th order symbol map.

\begin{remark}
\label{rem:1}
For a morphism  $\pi: X \to S$ of complex analytic spaces or complex algebraic varieties, 
the theory of {\it relative principal parts of order $n$} denoted as $\cat{P}^{(n)}_{X/S}$ has been 
developed in \cite[p.n.14,  16.3]{GD}, \cite[section 2]{G1} and \cite[section 3]{R}.
We also have notion of  {\it relative principal parts of order $n$} associated with a holomorphic vector bundle 
$E$ over $X/S$, denotes as $\cat{P}^{(n)}_{X/S}(E)$ (see \cite[p.n. 37,  16.7]{GD}).

Since,
we are considering that the complex analytic spaces $X$
and $S$ are smooth, that is, they are complex manifolds and $\pi$ is a holomorphic surjective proper submersion,
we have an isomorphism of vector bundles (see \cite[Proposition 4.2]{R})
$$\cat{P}^{(k)}_{X/S}(E) \cong J^k_{X/S}(E)$$
for every $k \geq 0$.

\end{remark}

We describe relative differential operators as functors 
on the category of $\struct{X}$-modules.
Let $\struct{X}-\textbf{Mod}$ denote the category of 
$\struct{X}$-modules. Fix an $\struct{X}$-module 
$F \in \text{Ob}(\struct{X}-\textbf{Mod})$.
Define a functor
\begin{equation}
\label{eq:7}
\cat{F}^{k}_F : \struct{X}-\textbf{Mod} \longrightarrow
\struct{X}-\textbf{Mod}
\end{equation}
by 
\begin{equation}
\label{eq:8}
\cat{F}^{k}_F(E) = \DIFF[k]{S}{E}{F}.
\end{equation}
Then, $\cat{F}^{k}_F$ is a contravariant functor.

In view of above Remark \ref{rem:1} and \cite[p.n. 41, Proposition 16.8.4]{GD}, we have 

\begin{proposition}
\label{prop:1} Let $\pi: X \to S$ be a surjective proper
submersion of complex manifolds. Then,
for every $k \geq 0$,
the contravariant functor $\cat{F}^k_F$ is representable. More precisely, it is represented by the $k$-th order relative jet bundle, that is,
\begin{equation}
\label{eq:8.1}
\cat{F}^k_{F}(E) = \DIFF[k]{S}{E}{F} \cong \HOM[\struct{X}]{J^k_{X/S}(E)}{F}
\end{equation}
\end{proposition}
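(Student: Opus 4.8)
The plan is to exhibit an explicit natural isomorphism and then verify bijectivity by induction on $k$, using the two short exact sequences \eqref{eq:3} and \eqref{eq:6} already available. The candidate map is ``composition with the universal jet operator''. Concretely, let $j^k_E : E \to J^k_{X/S}(E)$ denote the canonical $k$-th order relative differential operator, given in a local holomorphic frame $\{e_\alpha\}$ of $E$ and local coordinates by $s = \sum_\alpha s_\alpha e_\alpha \mapsto \sum_\alpha (s_\alpha e_\alpha,\, e_\alpha\otimes d_{X/S}s_\alpha,\, \ldots,\, e_\alpha \otimes d^{(k)}_{X/S}s_\alpha)$. The first task is to check, via the right $\struct{X}$-module structure on $J^k_{X/S}(E)$ recorded in Section \ref{Rel_jet}, that $j^k_E$ is well defined independently of the frame and the chart and is a relative differential operator of order $k$. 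Given this, for any $\struct{X}$-linear $\phi : J^k_{X/S}(E) \to F$ the composite $\phi \circ j^k_E$ lies in $\Diff[k]{S}{E}{F}$, and I set $\Psi(\phi) = \phi\circ j^k_E$; naturality of $\Psi$ in $E$ is immediate from the functoriality of $J^k_{X/S}(-)$ and the commuting square \eqref{cd:3}.

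Next I would prove that $\Psi$ is a sheaf isomorphism by induction on $k$. The base case $k = 0$ is the definition: $J^0_{X/S}(E) = E$, $\Diff[0]{S}{E}{F} = \HOM[\struct{X}]{E}{F}$, and $j^0_E = \id{E}$, so $\Psi$ is the identity. For the inductive step, apply $\HOM[\struct{X}]{-}{F}$ to the sequence \eqref{eq:3}. Because $J^{k-1}_{X/S}(E)$ is locally free, \eqref{eq:3} is locally split as $\struct{X}$-modules, so $\HOM[\struct{X}]{-}{F}$ keeps it exact and yields $0 \to \HOM[\struct{X}]{J^{k-1}_{X/S}(E)}{F} \xrightarrow{(p^k_E)^*} \HOM[\struct{X}]{J^k_{X/S}(E)}{F} \xrightarrow{i^*} \HOM[\struct{X}]{E\otimes Sym^k\Omega^1_{X/S}}{F} \to 0$, whose right-hand term is canonically identified with $Sym^k\cat{T}_{X/S}\otimes\HOM[\struct{X}]{E}{F}$ through the characteristic-zero duality $Sym^k(V^*)\cong (Sym^k V)^*$. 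This row has exactly the same outer terms as the symbol sequence \eqref{eq:6}.

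I would then check that $\Psi$ defines a morphism from the first row to \eqref{eq:6}. On the sub-object, $\Psi_k \circ (p^k_E)^*$ equals $\Psi_{k-1}$ followed by the inclusion $\Diff[k-1]{S}{E}{F}\hookrightarrow \Diff[k]{S}{E}{F}$, which follows at once from the compatibility $p^k_E\circ j^k_E = j^{k-1}_E$ of the universal operators (the projection $p^k_E$ merely forgets the top $Sym^k\Omega^1_{X/S}$-component). On the quotient object I must show that $\sigma_k\circ\Psi_k$ agrees with the duality identification applied to the restriction $i^*$; this is the heart of the argument and the step I expect to be the main obstacle. It amounts to computing, in local coordinates, the top-order symbol of $\phi\circ j^k_E$ and recognising that it depends only on $\phi$ restricted to the subbundle $E\otimes Sym^k\Omega^1_{X/S}$, because the top component of $j^k_E$ is precisely $d^{(k)}_{X/S}$; the care needed lies in matching the normalisations in the definitions of $\sigma_k$ and of the higher differentials $d^{(j)}_{X/S}$, and in confirming the identification is coordinate-free. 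Once both squares commute, the left vertical is an isomorphism by the inductive hypothesis and the right vertical is the duality isomorphism, so the five lemma forces $\Psi_k : \HOM[\struct{X}]{J^k_{X/S}(E)}{F} \xrightarrow{\sim} \Diff[k]{S}{E}{F}$ to be an isomorphism, completing the induction.

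Finally, I would note that the whole construction is local and compatible with restriction, so the isomorphisms glue to the asserted isomorphism of sheaves $\cat{F}^k_F(E) = \DIFF[k]{S}{E}{F} \cong \HOM[\struct{X}]{J^k_{X/S}(E)}{F}$. Alternatively, one may bypass the induction by invoking the identification $\cat{P}^{(k)}_{X/S}(E)\cong J^k_{X/S}(E)$ from Remark \ref{rem:1} together with the representability of relative differential operators by principal parts in \cite{GD}, and transporting the universal property along that isomorphism; the direct argument above has the advantage of staying internal to the constructions of Section \ref{Rel_jet}.
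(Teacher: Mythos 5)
Your route is genuinely different from the paper's: the paper disposes of Proposition \ref{prop:1} by citing Remark \ref{rem:1} (the identification $\cat{P}^{(k)}_{X/S}(E)\cong J^k_{X/S}(E)$) together with \cite[16.8.4]{GD}, whereas you propose a self-contained induction on $k$, dualizing \eqref{eq:3} against $F$ and comparing with the symbol sequence \eqref{eq:6} via the five lemma. That comparison step is exactly how the paper itself later reads the symbol sequence ("applying $\HOM[\struct{X}]{-}{F}$ to \eqref{eq:3} gives \eqref{eq:6}"), and the inductive skeleton is sound: \eqref{eq:3} is a sequence of locally free sheaves, so it stays exact under $\HOM[\struct{X}]{-}{F}$, and the outer verticals are the inductive hypothesis and the duality $\HOM[\struct{X}]{E\otimes Sym^k\Omega^1_{X/S}}{F}\cong Sym^k\cat{T}_{X/S}\otimes\HOM[\struct{X}]{E}{F}$. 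Your approach buys an argument internal to Section \ref{Rel_jet}; the paper's buys brevity at the cost of outsourcing the universal property to EGA.

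There is, however, a concrete error in your candidate universal operator, and it is not a normalization issue. The paper equips $J^k_{X/S}(E)=\bigoplus_j E\otimes Sym^j\Omega^1_{X/S}$ with the \emph{twisted} right $\struct{X}$-module structure $(s,\sigma,\dots)\cdot f=(fs,\,f\sigma+s\otimes d_{X/S}f,\dots)$, and $\HOM[\struct{X}]{J^k_{X/S}(E)}{F}$ is taken with respect to that structure (this is forced: with the naive componentwise structure \eqref{eq:2} would split tautologically, contradicting Proposition \ref{prop:2}). Your Taylor-type formula is $j^k_E(s)=\sum_\alpha (e_\alpha,0,\dots,0)\cdot s_\alpha$, which is (a) frame-dependent --- replacing $e_\alpha$ by $e'_\beta=\sum_\alpha g_{\alpha\beta}e_\alpha$ changes $(e'_\beta,0,\dots,0)$ by the terms $e_\alpha\otimes d^{(j)}_{X/S}g_{\alpha\beta}$ --- and (b) $\struct{X}$-linear for the twisted structure, since $j^k_E(fs)=j^k_E(s)\cdot f$. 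Consequently $\Psi(\phi)=\phi\circ j^k_E$ lands in $\DIFF[0]{S}{E}{F}$ (for a connection $\widetilde\nabla$ it returns the frame-dependent zeroth-order part $\sum_\alpha s_\alpha\nabla(e_\alpha)$ rather than $\nabla(s)$), so $\Psi$ cannot be the asserted isomorphism. The correct universal operator in the paper's model is the inclusion of the zeroth summand, $s\mapsto (s,0,\dots,0)$, which is canonical as a map of $\pi^{-1}\struct{S}$-modules and satisfies $[\,j^k_E,f\,](s)=(0,-s\otimes d_{X/S}f,\dots,-s\otimes d^{(k)}_{X/S}f)$, an operator of order $k-1$ by induction; your formula belongs to the other standard presentation (direct sum identified as a module via the untwisted structure), and mixing the two conventions breaks the argument. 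With this replacement your compatibility $p^k_E\circ j^k_E=j^{k-1}_E$, the symbol computation (now via the $k$-fold commutator rather than ``the top component is $d^{(k)}_{X/S}$''), and the five lemma all go through.
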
 
In fact, applying $\HOM[\struct{X}]{-}{F}$ to the short exact sequence \eqref{eq:3}, we get the symbol exact sequence \eqref{eq:6}.

\subsection{Relative holomorphic connection}
Now, we describe the relationship among relative jet bundles, relative differential operators and relative holomorphic connections.
For details on  relative holomorphic connections  see \cite{BS}. Consider the short exact sequence (\ref{eq:5}), and take $E = F$,  we get
\begin{equation}\label{eq:9}
0\rightarrow\HOM[\struct{X}]{E}{E}\rightarrow \DIFF[1]{S}{E}{E}\xrightarrow{\sigma_1} \cat{T}_{X/S}\otimes\HOM[\struct{X}]{E}{E}\rightarrow 0.
\end{equation}
We denote $\HOM[\struct{X}]{E}{E}$ by $\END[\struct{X}]{E}$. The subbundle $$ \cat{A}t_{S}(E) = \sigma_1^{-1}(\cat{T}_{X/S}\otimes \id{E})\subset \DIFF[1]{S}{E}{E}$$ is known as relative Atiyah bundle. We get a short exact sequence
\begin{equation}\label{eq:10}
0\rightarrow \END[\struct{X}]{E}\rightarrow\cat{A}t_{S}(E)\xrightarrow{\sigma_1} \cat{T}_{X/S}\rightarrow 0,
\end{equation}
which is known as relative Atiyah sequence. 

Let $\at[S]{E} \in \coh{1}{X}{\Omega^1_{X/S}\END[\struct{X}]{E}}$ denote the extension class of 
the short exact sequence \eqref{eq:10}, called the relative Atiyah class. Then we have well established known facts.
\begin{proposition}
\label{prop:2}
Let $\pi: X \to S$ be a surjective proper
submersion of complex manifolds, and $E$ be a holomorphic vector bundle over $X$. 
\begin{enumerate}
\item \label{a} $E$ admits a relative holomorphic connection.
\item\label{b} The relative Atiyah sequence \eqref{eq:10} splits 
holomorphically.
\item \label{c}
The relative Atiyah class $\at[S]{E}$ vanishes.
\item \label{d}
The first order relative jet bundle sequence 
in \eqref{eq:2} splits holomorphically.
\end{enumerate} 
\end{proposition}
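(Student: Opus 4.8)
The plan is to prove the four conditions equivalent by realizing the existence of a relative holomorphic connection as a holomorphic splitting of each of the two short exact sequences \eqref{eq:2} and \eqref{eq:10}, and then identifying their extension classes. The only genuinely analytic input is the standard fact that a short exact sequence of locally free $\struct{X}$-modules admits a holomorphic splitting precisely when its class in $\mathrm{Ext}^1$ vanishes; everything else is a bookkeeping translation between splittings, sections, and the Leibniz rule. Concretely I would prove $(1)\Leftrightarrow(4)$ and $(1)\Leftrightarrow(2)$ by hand, deduce $(2)\Leftrightarrow(3)$ from the definition of the Atiyah class, and close the loop by matching the classes of \eqref{eq:2} and \eqref{eq:10}.

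First I would establish $(1)\Leftrightarrow(4)$ directly from the definition of $J^1_{X/S}(E)$. Writing a prospective $\struct{X}$-linear section $\lambda\colon E\to J^1_{X/S}(E)$ of $p_E$ in the decomposition $J^1_{X/S}(E)=E\oplus(E\otimes\Omega^1_{X/S})$ as $\lambda(s)=(s,-\nabla(s))$ for some $\pi^{-1}\struct{S}$-linear $\nabla\colon E\to E\otimes\Omega^1_{X/S}$, the requirement $\lambda(fs)=\lambda(s)\cdot f$ unwinds, using the twisted right $\struct{X}$-module structure $(s,\sigma)\cdot f=(fs,f\sigma+s\otimes d_{X/S}f)$, to exactly the Leibniz identity for $\nabla$. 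Thus holomorphic splittings of \eqref{eq:2} correspond bijectively to relative holomorphic connections, which is $(1)\Leftrightarrow(4)$.

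Next, for $(1)\Leftrightarrow(2)$ I would read the relative Atiyah sequence \eqref{eq:10} at the level of first-order operators: a holomorphic splitting $\mu\colon\cat{T}_{X/S}\to\cat{A}t_{S}(E)$ of $\sigma_1$ assigns to each local relative vector field $v$ a first-order operator $D_v$ with $\sigma_1(D_v)=v\otimes\id{E}$, i.e. $D_v(fs)=fD_v(s)+v(f)s$; the $\struct{X}$-linearity of $\mu$ makes $v\mapsto D_v$ tensorial, so that the induced $\nabla(s)\in\Omega^1_{X/S}\otimes E$ is a relative holomorphic connection, and conversely. To see that \eqref{eq:2} and \eqref{eq:10} carry the same obstruction, I would apply $\HOM[\struct{X}]{-}{E}$ to \eqref{eq:2} and use the representability isomorphism $\DIFF[1]{S}{E}{E}\cong\HOM[\struct{X}]{J^1_{X/S}(E)}{E}$ of Proposition \ref{prop:1}, under which the Atiyah subbundle $\cat{A}t_{S}(E)=\sigma_1^{-1}(\cat{T}_{X/S}\otimes\id{E})$ is the pullback of $\cat{T}_{X/S}\otimes\id{E}$; this exhibits \eqref{eq:10} as a subextension of the sequence obtained by applying $\HOM[\struct{X}]{-}{E}$ to \eqref{eq:2} and identifies their extension classes in $\coh{1}{X}{\Omega^1_{X/S}\otimes\END[\struct{X}]{E}}$ up to sign.

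Finally, $(2)\Leftrightarrow(3)$ is the definitional statement that $\at[S]{E}$ is the extension class of \eqref{eq:10}, combined with the general fact that a locally free short exact sequence splits holomorphically if and only if this class vanishes. The main obstacle I anticipate is not any single implication but the careful matching of the two sequences: verifying that the splitting of the Atiyah sequence and the splitting of the jet sequence produce the same connection with consistent signs, and confirming that the identification $\mathrm{Ext}^1(\cat{T}_{X/S},\END[\struct{X}]{E})\cong\coh{1}{X}{\Omega^1_{X/S}\otimes\END[\struct{X}]{E}}$ sends the Atiyah class to the jet extension class, so that conditions (3) and (4) genuinely refer to the vanishing of one and the same cohomology class.
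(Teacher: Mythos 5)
Your argument is correct and logically complete, but it takes a more self-contained route than the paper. The paper disposes of the equivalence of (1), (2), (3) by citing \cite[Proposition 4.3]{BS} and \cite[Corollary 4.4]{BS}, and obtains $(2)\Leftrightarrow(4)$ from the representability isomorphism \eqref{eq:8.1}; you instead prove $(1)\Leftrightarrow(4)$ and $(1)\Leftrightarrow(2)$ by hand (unwinding the twisted right $\struct{X}$-module structure on $J^1_{X/S}(E)$, and reading a splitting of \eqref{eq:10} as a tensorial assignment $v\mapsto D_v$ with $\sigma_1(D_v)=v\otimes\id{E}$), and get $(2)\Leftrightarrow(3)$ from the definition of $\at[S]{E}$ as an extension class. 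Your final step, identifying \eqref{eq:10} as the pullback along $v\mapsto v\otimes\id{E}$ of the sequence obtained by applying $\HOM[\struct{X}]{-}{E}$ to \eqref{eq:2}, is essentially the paper's use of \eqref{eq:8.1}, and is in fact optional for you, since the chain $(4)\Leftrightarrow(1)\Leftrightarrow(2)\Leftrightarrow(3)$ already closes all the equivalences. What your version buys is independence from the external reference and an explicit dictionary between splittings and connections; what it costs is some sign bookkeeping, and indeed there is one small slip: with the paper's convention $(s,\sigma)\cdot f=(fs,\,f\sigma+s\otimes d_{X/S}f)$, the section must be $\lambda(s)=(s,+\nabla(s))$ rather than $(s,-\nabla(s))$, since the latter yields $\nabla(fs)=f\nabla(s)-s\otimes d_{X/S}f$ instead of the Leibniz identity. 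This does not affect the equivalence (replace $\nabla$ by $-\nabla$), but the sign should be fixed.
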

\begin{proof}
For the equivalence of \eqref{a}, \eqref{b}, \eqref{c}
see \cite[Proposition 4.3]{BS} and \cite[Corollary 4.4]{BS}. Next, equivalence \eqref{b} and \eqref{d} follows from \eqref{eq:8.1}.  
\end{proof}

A relative holomorphic connection $\nabla$
on $E$ is in fact a relative first order differential 
operator whose symbol is an identity morphism of $E$.
More precisely, since $\nabla$ satisfies Leibniz identity
\begin{equation}\label{eq:11}
\nabla (f s) = f \nabla (s) + d_{X/S}(f)\otimes s,
\end{equation}
where $f$ is a local section of $\struct{X}$, and $s$ is a local section of $E$. 
From (\ref{eq:11}), we have
$$[\nabla, f](s) = d_{X/S}(f)\otimes s,$$
where $[\nabla, f](s) = \nabla(fs) - f\nabla (s).$
Note that $\nabla$ is in fact first order relative differential operator whose symbol $\sigma_1(\nabla)$ is the identity automorphism of $E$, because 
$$\sigma_1(\nabla)(d_{X/S}f)(s) = [\nabla, f](s) = d_{X/S}(f)\otimes s.$$
Thus, $\nabla\in \coh{0}{X}{\DIFF[1]{S}{E}{E \otimes \Omega^1_{X/S}}}$ such that $\sigma_1(\nabla) = \id{E}.$

From (\ref{eq:8.1}), $\coh{0}{X}{\DIFF[1]{S}{E}{E \otimes \Omega^1_{X/S}}} \cong \coh{0}{X}{ \HOM[\struct{X}]{J^1_{X/S}(E)}{E\otimes\Omega^1_{X/S}}},$ therefore, we have an $\struct{X}$-linear map 
$$\widetilde{\nabla} :  J^1_{X/S}(E)\longrightarrow E\otimes \Omega^1_{X/S},$$
which gives an splitting of short exact sequence 
\eqref{eq:2}, because $\sigma_1(\nabla) = \id{E}$.

Thus, a relative holomorphic connection on $E$ is a
holomorphic  map  $$\widehat{\nabla} : E\longrightarrow J^1_{X/S}(E)$$ (as $\struct{X}$-module) such that the composition
$$E\xrightarrow{\widehat{\nabla}} J^1_{X/S}(E)\xrightarrow{p_E} E$$
is the identity morphism $\id{E}$.

\section{Relative opers arising from relative differential operators with symbol an isomorphism}
\label{Rel_op_iso}
 We investigate the relative differential operators from $E$ to $Sym^k \Omega^1_{X/S} \otimes E$ whose symbol is the identity automorphism $\id{E}$.
\begin{proposition}
\label{prop:3}
Let $\pi: X \to S$ be a surjective  holomorphic proper
submersion of complex manifolds of relative dimension $\geq 1$. For $k \geq 1$,
let $$P : E\rightarrow Sym^k\Omega^1_{X/S}\otimes E$$ be a relative differential operator of order $k$ with symbol $$\sigma_k(P) = \id{E} \in \coh{0}{X}{Sym^k \cat{T}_{X/S}\otimes \END[\struct{X}]{E}},$$  the identity isomorphism of $E$, where $\sigma_k$ is in \eqref{eq:6}.   Then, $P$ induces a relative holomorphic connection $\nabla_{P}$ on $(k - 1)$-th relative jet bundle $J^{k-1}_{X/S}(E)$ associated with $E$.
\end{proposition}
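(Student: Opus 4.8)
The plan is to realise the desired connection as an $\struct{X}$-linear splitting of the first-order relative jet sequence \eqref{eq:2} for the bundle $V := J^{k-1}_{X/S}(E)$, built directly out of $P$. First I would pass from the differential operator to a bundle map: by the representability in Proposition \ref{prop:1}, the isomorphism \eqref{eq:8.1} identifies $P \in \coh{0}{X}{\DIFF[k]{S}{E}{Sym^k\Omega^1_{X/S}\otimes E}}$ with a global $\struct{X}$-linear homomorphism
\[
\widetilde{P} : J^k_{X/S}(E) \longrightarrow Sym^k\Omega^1_{X/S}\otimes E .
\]

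Next I would read off the symbol hypothesis at the level of $\widetilde{P}$. Since the symbol exact sequence \eqref{eq:6} arises by applying $\HOM[\struct{X}]{-}{Sym^k\Omega^1_{X/S}\otimes E}$ to the jet sequence \eqref{eq:3}, the symbol $\sigma_k(P)$ is encoded by the restriction of $\widetilde{P}$ to the subbundle $E\otimes Sym^k\Omega^1_{X/S}\subset J^k_{X/S}(E)$. Under the canonical pairing $Sym^k\cat{T}_{X/S}\otimes Sym^k\Omega^1_{X/S}\to\struct{X}$ this restriction is identified with $\sigma_k(P)\in\coh{0}{X}{Sym^k\cat{T}_{X/S}\otimes\END[\struct{X}]{E}}$, so the hypothesis $\sigma_k(P)=\id{E}$ says exactly that $\widetilde{P}$ restricted to $E\otimes Sym^k\Omega^1_{X/S}$ is the canonical flip isomorphism onto $Sym^k\Omega^1_{X/S}\otimes E$. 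Composing $\widetilde{P}$ with the inverse flip then gives a retraction of the inclusion in \eqref{eq:3}, so that sequence splits $\struct{X}$-linearly; this produces an $\struct{X}$-linear section
\[
s : J^{k-1}_{X/S}(E) \longrightarrow J^k_{X/S}(E), \qquad p^k_E\circ s = \id{J^{k-1}_{X/S}(E)},
\]
whose image is $\Ker{\widetilde{P}}$.

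I would then combine $s$ with the canonical injection $\theta$ of \eqref{eq:4}, taken with $k-1$ in place of $k$, namely $\theta : J^k_{X/S}(E)\to J^1_{X/S}(J^{k-1}_{X/S}(E))$, and set $\widehat{\nabla_P} := \theta\circ s$. Both $\theta$ and $s$ are $\struct{X}$-linear, hence $\widehat{\nabla_P}$ is an $\struct{X}$-linear map $J^{k-1}_{X/S}(E)\to J^1_{X/S}(J^{k-1}_{X/S}(E))$. The crux is the compatibility of $\theta$ with the jet projections, namely $p_{J^{k-1}_{X/S}(E)}\circ\theta = p^k_E$; granting this, $p_{J^{k-1}_{X/S}(E)}\circ\widehat{\nabla_P} = p^k_E\circ s = \id{J^{k-1}_{X/S}(E)}$, so $\widehat{\nabla_P}$ splits the sequence \eqref{eq:2} for $V = J^{k-1}_{X/S}(E)$. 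By the characterisation of a relative holomorphic connection recorded at the end of Section \ref{Rel_jet} (an $\struct{X}$-linear $\widehat{\nabla}:V\to J^1_{X/S}(V)$ with $p_V\circ\widehat{\nabla}=\id{V}$), this splitting is precisely the sought connection $\nabla_P$ on $J^{k-1}_{X/S}(E)$.

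The step I expect to be the main obstacle is establishing the projection identity $p_{J^{k-1}_{X/S}(E)}\circ\theta = p^k_E$, which requires unwinding the implicit definition of $\theta$ in \eqref{eq:4}. I would verify it in a holomorphic chart $(U,\phi)$ using the explicit right $\struct{X}$-module structures on the jet bundles and the higher differentials $d^{(j)}_{X/S}$, taking the kernel description of $\theta$ for the case $k=1$ as the model and checking that the leading component is carried to the correct projection; alternatively it follows from the universal property underlying the representability in Proposition \ref{prop:1}. Everything above uses only the jet and symbol formalism together with $\struct{X}$-linearity, so the argument applies for relative dimension $\geq 1$, as claimed.
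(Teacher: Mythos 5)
Your proposal is correct and follows essentially the same route as the paper: identify $P$ with an $\struct{X}$-linear map $J^k_{X/S}(E)\to Sym^k\Omega^1_{X/S}\otimes E$ via \eqref{eq:8.1}, use $\sigma_k(P)=\id{E}$ to split \eqref{eq:3}, compose the resulting section with $\theta$ from \eqref{eq:4}, and read off the connection as a splitting of the first-order jet sequence for $J^{k-1}_{X/S}(E)$. The projection identity you flag as the main obstacle is exactly the commutativity of the right-hand square of the paper's diagram \eqref{cd:4}, which the paper asserts without further verification.
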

\begin{proof}
From \eqref{eq:8.1}, we have  
$$\DIFF[k]{S}{E}{Sym^k\Omega^1_{X/S}\otimes E} \cong \HOM[\struct{X}]{J^k_{X/S}(E)}{Sym^k\Omega^1_{X/S}\otimes E}.$$ Therefore, the differential operator $P$ gives a morphism
$$\phi_P : J^k_{X/S}(E)\longrightarrow Sym^k\Omega^1_{X/S}\otimes E.$$
Now, $\phi_P$
 is an splitting of the short exact sequence (\ref{eq:3}), because $\sigma_k(P) = \id{E}$.

Next, the splitting $\phi_P$ defines a morphism
\begin{equation}\label{eq:13}
\Psi_P : J^{k-1}_{X/S}(E)\longrightarrow J^k_{X/S}(E)
\end{equation}
of vector bundles whose composition with the projection
$p^k_E$ in (\ref{eq:3}) is the identity automorphism $\id{J^{k-1}_{X/S}(E)}$.

Consider the following commutative diagram of vector bundles
 \begin{equation}
 \label{cd:4}
 \xymatrix@C=2em{ 0 \ar[r] & Sym^k\Omega^1_{X/S}\otimes E \ar[d] \ar[r] & 
 J^k_{X/S}(E) \ar[d]^{\theta} \ar[r] & J^{k-1}_{X/S}(E)
 \ar[r] \ar@{=}[d] & 0 \\
 0 \ar[r] & \Omega^1_{X/S}\otimes J^{k-1}_{X/S}(E) \ar[r] & J^1_{X/S}(J^{k-1}_{X/S}(E)) \ar[r] & J^{k-1}_{X/S}(E) 
 \ar[r] & 0 \\}
 \end{equation}
where $\theta$ is defined in equation (\ref{eq:4}), 
the top exact sequence is the  relative jet bundle exact sequence in \eqref{eq:3} and the bottom jet bundle exact sequence is obtained from \eqref{eq:2} by putting $J^{k-1}_{X/S}(E)$ in place of $E$. The morphism $\Psi_P$ in equation (\ref{eq:13}) composed with $\theta$ gives a morphism
$$\theta \circ \Psi_P : J^{k-1}_{X/S}(E)\longrightarrow J^1_{X/S}(J^{k-1}_{X/S}(E))$$
of vector bundles which is nothing but the splitting of bottom short exact sequence in (\ref{cd:4}).
From Proposition \ref{prop:2} \eqref{d}, $J^{k-1}_{X/S}(E)$ admits  a relative holomorphic connection.

Moreover, 
let $$\chi_P : J^1_{X/S}(J^{k-1}_{X/S}(E))\longrightarrow \Omega^1_{X/S}\otimes J^{k-1}_{X/S}(E)$$ be the morphism of vector bundles obtained from the splitting of the bottom exact sequence in (\ref{cd:4}). Then from \eqref{eq:8.1}, $\chi_P$ corresponds to a first order differential operator 
\begin{equation}\label{eq:14}
\nabla_P\in \coh{0}{X} {\DIFF[1]{S}{J^{k-1}_{X/S}(E)}{\Omega^1_{X/S}\otimes J^{k-1}_{X/S}(E)}}
\end{equation}
such that $\sigma_1(\nabla_P)$ is the identity automorphism $\id{J^{k-1}_{X/S}(E)}$, which is nothing but the relative holomorphic connection in $J^{k-1}_{X/S}(E)$.
\end{proof}

Consider the following chain of projections of the vector bundle $J^{k-1}_{X/S}(E)$
\begin{equation}
J^{k-1}_{X/S}(E)\xrightarrow{{p}_{k-1}} J^{k-2}_{X/S}(E)\xrightarrow{{p}_{k-2}} J^{k-3}_{X/S}(E)\xrightarrow{{p}_{k-3}} \cdots \xrightarrow{{p}_{1}} J^0_{X/S}(E) = E\xrightarrow{{p}_0} 0.
\end{equation}
Let 
\begin{equation}
\label{eq:15}
\gamma_{k-1-i} : J^{k-1}_{X/S}(E)\rightarrow J^{k-1-i}_{X/S}(E)
\end{equation}
be the projection defined by the composition
$$\gamma_{k-1-i} = {p}_{k-1-i +1}\circ \cdots \circ {p}_{k-2}\circ{p}_{k-1},$$
for $i = 1, \ldots, k-1$.
We denote the kernel of $\gamma_{k-1-i}$ by $\mathcal{K}_i$, then we get following filtration of $J^{k-1}_{X/S}(E)$ 
\begin{equation}\label{eq:16}
0 = \mathcal{K}_0 \subset \mathcal{K}_1\subset \mathcal{K}_2\subset\cdot\cdot\cdot\subset\mathcal{K}_{k-1}\subset \cat{K}_k = J^{k-1}_{X/S}(E).
\end{equation}

Moreover, 
$E\otimes Sym^{k-1}\Omega^1_{X/S}$  is a subbundle of  $J^{k-1}_{X/S}(E),$ and from Proposition \ref{prop:3},  $\nabla_P$ is a 
  relative holomorphic connection on $J^{k-1}_{X/S}(E)$,
  then from Proposition \ref{prop:1}, we get a filtration (\ref{eq:1})  corresponding to the second fundamental form $\beta_{X/S}(E\otimes Sym^{k-1}\Omega^1_{X/S}, \nabla_P)$ of the subbundle $E\otimes Sym^{k-1}\Omega^1_{X/S}.$
\begin{theorem}\label{thm:1}
Let $\pi: X \to S$ be a surjective  holomorphic proper
submersion of complex manifolds of relative dimension $
1$. For $k \geq 1$,
let $P : E\rightarrow Sym^k\Omega^1_{X/S}\otimes E$ be a 
relative differential operator of order $k$ with  symbol  
as identity morphism $\id{E}$ of $E$. Then the 
filtration of $J^{k-1}_{X/S}(E)$ as defined in 
(\ref{eq:16}) coincides with the filtration (\ref{eq:1}) in 
Proposition \ref{prop:1}, after replacing $E$ by 
$J^{k-1}_{X/S}(E)$, $F$ by $E\otimes Sym^{k-1}
\Omega^1_{X/S}$ and $\nabla$ by $\nabla_P$.

Further, the homomorphism $\alpha_i$ defined in equation (\ref{homomorphism}) coincides with the identity automorphism of $Sym^{k-i}\Omega^1_{X/S}\otimes E$, where $i = 1, \dots,  k-1$.
\end{theorem}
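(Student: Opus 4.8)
The plan is to prove both assertions together by induction on the filtration length, after first making the filtration \eqref{eq:16} completely explicit. Since each projection in \eqref{eq:3} is $\struct{X}$-linear and merely discards the top graded summand, the composite $\gamma_{k-1-i}$ of \eqref{eq:15} discards the top $i$ summands, so that $\mathcal{K}_i=\bigoplus_{j=k-i}^{k-1}E\otimes Sym^j\Omega^1_{X/S}$ as an $\struct{X}$-submodule of $J^{k-1}_{X/S}(E)$. In particular $\mathcal{K}_1=E\otimes Sym^{k-1}\Omega^1_{X/S}=F$, which is precisely the initial term of the filtration \eqref{eq:1} built by Proposition \ref{proposition1} from $\beta_{X/S}(E\otimes Sym^{k-1}\Omega^1_{X/S},\nabla_P)$, and the successive quotients are $\mathcal{K}_i/\mathcal{K}_{i-1}\cong E\otimes Sym^{k-i}\Omega^1_{X/S}$. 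It therefore suffices to show that $\nabla_P$ shifts this filtration by one, i.e. $\nabla_P(\mathcal{K}_i)\subseteq\Omega^1_{X/S}\otimes\mathcal{K}_{i+1}$, and that the induced $\struct{X}$-linear map on the $i$-th graded piece is an isomorphism onto $\Omega^1_{X/S}\otimes(\mathcal{K}_{i+1}/\mathcal{K}_i)$: granting this, the image of $\beta_{X/S}(\mathcal{K}_i,\nabla_P)$ is exactly $\mathcal{K}_{i+1}/\mathcal{K}_i$, so the construction of Proposition \ref{proposition1} reproduces $F_{i+1}=\mathcal{K}_{i+1}$ at every stage and identifies $\alpha_i$ of \eqref{homomorphism} with this graded map.

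Next I would pass to the associated graded, where the maps $\alpha_i$ actually live. For any connection that shifts the filtration by one, the induced maps on graded pieces are automatically $\struct{X}$-linear, because the Leibniz term $d_{X/S}(f)\,s$ preserves the filtration degree and hence dies in the relevant quotient; these graded maps are exactly the homomorphisms $\alpha_i$. Moreover two such connections that differ by an $\struct{X}$-linear form sending each $\mathcal{K}_i$ into $\Omega^1_{X/S}\otimes\mathcal{K}_i$ induce the same $\alpha_i$, since such a form lands in $\Omega^1_{X/S}\otimes(\mathcal{K}_i/\mathcal{K}_{i-1})$ rather than in $\Omega^1_{X/S}\otimes(\mathcal{K}_{i+1}/\mathcal{K}_i)$. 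This reduces the whole problem to a symbol-level computation: only the principal symbol $\sigma_k(P)=\id{E}$, and the canonical symbols of the jet map $\theta$ and of the universal first-order operator, can contribute to $\alpha_i$.

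Then I would carry out the local computation. Choose a relative chart with fibre coordinate $z$, so that $\Omega^1_{X/S}$ is freely generated by $dz$, $Sym^j\Omega^1_{X/S}$ by $(dz)^j$, and $d^{(j)}_{X/S}f=\frac{1}{j!}(\partial^j f/\partial z^j)(dz)^j$, and trivialize $E$. Using \eqref{eq:8.1} one writes the splitting $\phi_P$ of \eqref{eq:3} attached to $P$ (conveniently via the $\struct{X}$-frame $\{j^k(z^m)\}_{m=0}^{k}$ of $J^k_{X/S}(E)$, on which $\phi_P(j^k(z^m))=P(z^m)$), reads off the complementary splitting $\Psi_P$ of \eqref{eq:13}, composes with $\theta$ to obtain the jet-form connection $\widehat{\nabla}_P=\theta\circ\Psi_P$, and finally forms $\nabla_P=j^1-\widehat{\nabla}_P$ where $j^1$ is the universal first-order operator on $J^{k-1}_{X/S}(E)$. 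A direct inspection then shows that, modulo the filtration-preserving terms discarded in the previous paragraph, $\nabla_P$ acts on the summand $E\otimes Sym^{k-i}\Omega^1_{X/S}$ by the relative differentiation that raises the symmetric degree by one, i.e. by the canonical map $E\otimes Sym^{k-i}\Omega^1_{X/S}\to\Omega^1_{X/S}\otimes E\otimes Sym^{k-i-1}\Omega^1_{X/S}$. Here the hypothesis of relative dimension one is essential: because $\Omega^1_{X/S}$ is a line bundle, the canonical contraction $Sym^{k-i}\Omega^1_{X/S}\cong\Omega^1_{X/S}\otimes Sym^{k-i-1}\Omega^1_{X/S}$ is an isomorphism, under which this graded map becomes the identity automorphism of $Sym^{k-i}\Omega^1_{X/S}\otimes E$, so that $\alpha_i=\id{Sym^{k-i}\Omega^1_{X/S}\otimes E}$ and in particular $\alpha_i$ is an isomorphism.

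Feeding this back into the inductive scheme of the first paragraph, starting from $\mathcal{K}_1=F$, yields $F_i=\mathcal{K}_i$ for all $i$, and the filtration exhausts $J^{k-1}_{X/S}(E)$, so that $n=k$ and $F_n=J^{k-1}_{X/S}(E)$; both assertions follow. I expect the main obstacle to be precisely the explicit identification in the third paragraph: propagating the right $\struct{X}$-module structure of the jet bundles through $\phi_P$, $\Psi_P$, $\theta$ and $j^1$, and checking with the correct combinatorial constants that the graded action is exactly the degree-raising map rather than a nonzero multiple of it, together with the bookkeeping confirming that the lower-order coefficients of $P$ contribute only filtration-preserving terms. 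The one genuinely new feature compared with the Riemann surface case of \cite{B1} is that all of this is performed for the relative derivation $d_{X/S}$ and the relative jet bundles; the underlying combinatorics is unchanged.
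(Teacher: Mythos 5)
Your strategy is correct but follows a genuinely different route from the paper. The paper never passes to local coordinates: after writing $\cat{K}_i=\cat{K}_{i-1}\oplus(Sym^{k-i}\Omega^1_{X/S}\otimes E)$ it identifies $\beta_{X/S}(\cat{K}_i,\nabla_P)$ purely diagrammatically, by fitting $\Psi_P$ from \eqref{eq:13} and the jet projections into the commutative diagrams \eqref{cd:5} and \eqref{cd:6}, so that the second fundamental form is read off as the map $\cat{K}_i/\cat{K}_{i-1}\to Sym^{k-i}\Omega^1_{X/S}\otimes E$ induced between the two rows of \eqref{cd:6}, which is the canonical identification. You instead (i) reduce to the associated graded, (ii) observe that two operators with the same symbol yield connections differing by an $\struct{X}$-linear form with values in $\Omega^1_{X/S}\otimes\cat{K}_1$, so that only $\sigma_k(P)=\id{E}$ can contribute to the $\alpha_i$, and (iii) propose to compute the standard representative explicitly in a fibre coordinate. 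Your step (ii) is a genuine addition not present in the paper and is valid: $\phi_P-\phi_{P'}$ kills the sub $Sym^k\Omega^1_{X/S}\otimes E$ of $J^k_{X/S}(E)$, hence $\theta\circ(\Psi_P-\Psi_{P'})$ lands in $\Omega^1_{X/S}\otimes\cat{K}_1$, which is filtration--preserving. What your write-up buys is a clean explanation of \emph{why} the answer depends only on the symbol, and an honest flagging of the normalisation issue (identity versus a nonzero multiple), which the paper's diagram chase settles implicitly by exhibiting $\alpha_i$ as the tautological map $\tilde\nu$ in \eqref{newmorphism}; what it costs is that the decisive computation in your third paragraph is only asserted (``a direct inspection then shows''), and it is exactly there that the combinatorial factors from $d^{(j)}_{X/S}$, the right $\struct{X}$-module structure, and the map $\theta$ of \eqref{eq:4} must be tracked. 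If you carry that computation out for the local model $P=d^{(k)}_{X/S}$ (for which $\phi_P$ is projection onto the top summand and $\nabla_P(j^{k-1}(s))=\theta(0,\dots,0,d^{(k)}_{X/S}s)$), your argument closes and recovers the theorem; as written it is a sound plan rather than a complete proof, but the architecture and the use of relative dimension one (so that $Sym^{k-i}\Omega^1_{X/S}\cong\Omega^1_{X/S}\otimes Sym^{k-i-1}\Omega^1_{X/S}$) match the paper's.
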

\begin{proof}
Note that the terms of the filtration in \eqref{eq:16}
can be explicitly given as 
\begin{equation}
\label{eq:16.1}
\cat{K}_i =  \cat{K}_{i-1} \oplus Sym^{k-i}\Omega^1_{X/S}\otimes E,
\end{equation}
for $i = 1, \ldots, k$. 
Now, applying the same steps as in the proof of the 
 Proposition \ref{prop:1} 
for the  vector bundle $J^{k-1}_{X/S}(E)$, subbundle $F = Sym^{k-1}\Omega^1_{X/S}\otimes E$ and relative holomorphic  connection $\nabla_P$ on $J^{k-1}_{X/S}(E)$ 
we get the following terms of the filtration in \eqref{eq:1}
\begin{equation}\label{omegafiltration}
\left\{
\begin{array}{ll}
	F_1 = F = Sym^{k-1}\Omega^1_{X/S}\otimes E; \hspace{.3 cm}F_2 = F_1 \oplus (Sym^{k-2}\Omega^1_{X/S}\otimes E)\\
	\\
F_3 = F_2 \oplus (Sym^{k-3}\Omega^1_{X/S}\otimes E); \ldots F_i = F_{i-1} \oplus (Sym^{k-i}\Omega^1_{X/S}\otimes E)\\
\\
F_{k-1} = F_{k-2} \oplus (\Omega^1_{X/S}\otimes E); \hspace{.4 cm} F_k = F_{k-1} \oplus E = J^{k-1}_{X/S}(E).\\
\end{array}
\right.
\end{equation}
Thus, the two filtrations coincide and have same length $k$.

Next, we show that $\alpha_i = \beta_{X/S}(\cat{K}_i, \nabla_P)=\id{Sym^{k-i}\Omega^1_{X/S}\otimes E}$
for $i = 1, \ldots, k-1$. Since the relative dimension is 
$1$, $\Omega^1_{X/S}$ is a locally free sheaf of rank $1$, and hence  all its symmetric powers are locally free sheaf of rank $1$. Therefore, using the expression 
of $\cat{K}_i$ in \eqref{eq:16.1}, the successive quotients in the filtration \eqref{eq:16} has the same rank.
Now, we give another description of $\beta_{X/S}(\cat{K}_i, \nabla_P)$ as follows.
Consider the following commutative diagram
 \begin{equation}
 \label{cd:5}
 \xymatrix@C=2em{ &  &  & 0 \ar[d]  & 
     \\
 & & & \mathcal{K}_i\ar[d]^{\iota} & & \\
 & &  J^{k}_{X/S}(E)\ar[d]^{\varphi}  & J^{k-1}_{X/S}(E)
 \ar[l]_{\Psi_P} \ar[d]^{\gamma_{k-1-i}} & 
 \\
 0 \ar[r] & {Sym^{k-i}\Omega^1_{X/S}\otimes E} \ar[r]^{\iota} & J^{k-i}_{X/S}(E)\ar[r]^{p^{k-i}_E} \ar[d]& J^{k-i-1}_{X/S}(E) 
 \ar[r] \ar@{=}[d] & 0 \\
 & & J^{k-i-1}_{X/S}(E) \ar@{=}[r] & J^{k-i-1}_{X/S}(E) &  & \\}
 \end{equation}
where $\mathcal{K}_i = \Ker{\gamma_{k-1-i}}$ and
$\Psi_P$ is defined in \eqref{eq:13}.
Because of the commutativity of the above diagram
\eqref{cd:5}, we have
$$p^{k-i}_E \circ \varphi \circ \Psi_P \circ \iota = 0.$$ 
Thus, the morphism $\varphi \circ \Psi_P \circ \iota$
factors through $Sym^{k-i}\Omega^1_{X/S}\otimes E$, and hence we get a morphism
$$\mu_i : \mathcal{K}_i \rightarrow Sym^{k-i}\Omega^1_{X/S}\otimes E$$
which is nothing but the second fundamental form for the subbundle $\mathcal{K}_i$ of $J^{k-1}_{X/S}(E)$
with respect to $\nabla_P$, i.e., $\mu_i = \beta_{X/S}(\mathcal{K}_i, \nabla_P)$.
Also, note that $\mu_i (\cat{K}_{i-1}) = 0$, therefore we have 
$$\mu_i = \beta_{X/S}(\mathcal{K}_i, \nabla_P) : \frac{\mathcal{K}_i}{\cat{K}_{i-1}} \rightarrow Sym^{k-i}\Omega^1_{X/S}\otimes E.$$

Further consider the following commutative diagram
\begin{equation}
 \label{cd:6}
 \xymatrix@= 3em{ 0 \ar[r] & \mathcal{K}_i \ar[d]^\nu \ar[r] & 
 J^{k-1}_{X/S}(E) \ar[d] \ar[r]^{\gamma_{k-1-i}} & J^{k-1-i}_{X/S}(E)\ar@{=}[d]
 \ar[r] & 0 \\
 0 \ar[r] & Sym^{k-i}\Omega^1_{X/S}\otimes E \ar[r] & J^{k-i}_{X/S}(E) \ar[r]^{p^{k-i}_E} & J^{k-i-1}_{X/S}(E) 
 \ar[r] & 0 \\}
 \end{equation}
where $\nu$ is defined due to commutativity of the diagram. Note that $\nu$ coincides with $\mu_i$, and have property that it vanishes on the subbundle $\mathcal{K}_{i-1}\subset\mathcal{K}_i$.

Now the morphism in (\ref{cd:6}) induces a morphism
\begin{equation}\label{newmorphism}
\tilde{\nu} : \frac{\mathcal{K}_i}{\mathcal{K}_{i-1}} =
Sym^{k-i}\Omega^1_{X/S}\otimes E \longrightarrow Sym^{k-i}\Omega^1_{X/S}\otimes E
\end{equation}
which is an isomorphism. Thus,  the morphism $\alpha_i =  \beta_{X/S}(\mathcal{K}_i, \nabla_P)$ is the identity automorphism of $Sym^{k-1}\Omega^1_{X/S}\otimes E.$
This completes the proof of the theorem.
\end{proof}

From above Theorem \ref{thm:1}, we have 
\begin{corollary}
\label{cor:1}
Let $\pi: X \to S$ be a surjective  holomorphic proper
submersion of complex manifolds of relative dimension $
1$. For $k \geq 1$,
let $P : E\rightarrow Sym^k\Omega^1_{X/S}\otimes E$ be a 
relative differential operator of order $k$ with  symbol  
as identity morphism $\id{E}$ of $E$. Then the triple 
$(J^{k-1}_{X/S}(E), \nabla_P, \{\cat{K}_i\})$ is a relative oper associated to the second fundamental form
$\beta_{X/S}(Sym^{k-1}\Omega^1_{X/S}\otimes E, \nabla_P)$, where $\nabla_P$ is the relative holomorphic connection on $J^{k-1}_{X/S}(E)$ arising from $P$ in Proposition \ref{prop:3}.
\end{corollary}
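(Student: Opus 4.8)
The plan is to read off the statement directly from the definition of a \textbf{relative oper associated with the second fundamental form} given in Section \ref{Rel_oper}, and to supply the two required conditions from Theorem \ref{thm:1}. Concretely, set $F := Sym^{k-1}\Omega^1_{X/S}\otimes E$, a subbundle of $J^{k-1}_{X/S}(E)$, and let $\nabla_P$ be the relative holomorphic connection on $J^{k-1}_{X/S}(E)$ produced by Proposition \ref{prop:3}. Applying Proposition \ref{proposition1} to the pair $(J^{k-1}_{X/S}(E),\nabla_P)$ and the subbundle $F$ yields the second fundamental form $\beta_{X/S}(F,\nabla_P)$ together with its induced filtration $E^F_\bullet$ of the form \eqref{eq:1}. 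To conclude that the triple $(J^{k-1}_{X/S}(E),\nabla_P,\{\cat{K}_i\})$ is a relative oper associated with this second fundamental form, I must verify exactly the two defining clauses: that the filtration does not stabilize to a proper subbundle (its top term equals $J^{k-1}_{X/S}(E)$), and that each induced homomorphism $\alpha_i$ is an isomorphism.

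First I would invoke the first assertion of Theorem \ref{thm:1}, which identifies the filtration $E^F_\bullet$ with the filtration $\{\cat{K}_i\}$ of \eqref{eq:16}. Since the explicit terms in \eqref{omegafiltration} give $F_k = F_{k-1}\oplus E = J^{k-1}_{X/S}(E)$, the filtration reaches all of $J^{k-1}_{X/S}(E)$ and has length $k$; this is precisely the requirement $F_n = E$. Then I would invoke the second assertion of Theorem \ref{thm:1}, which states that for $i = 1,\dots,k-1$ the map $\alpha_i = \beta_{X/S}(\cat{K}_i,\nabla_P)$ is the identity automorphism $\id{Sym^{k-i}\Omega^1_{X/S}\otimes E}$, hence an isomorphism. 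Combining the two clauses, the triple $(J^{k-1}_{X/S}(E),\nabla_P,\{\cat{K}_i\})$ meets the definition verbatim, which finishes the argument.

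In this sense the corollary is essentially a packaging of Theorem \ref{thm:1}, so there is no genuine obstacle to overcome here; all the substantive geometry was already carried out in establishing that theorem. The only point requiring care is the hypothesis of relative dimension exactly one: it is this that forces $\Omega^1_{X/S}$, and therefore every $Sym^{j}\Omega^1_{X/S}$, to be a line bundle, so that $\Omega^1_{X/S}\otimes Sym^{k-i-1}\Omega^1_{X/S}\cong Sym^{k-i}\Omega^1_{X/S}$ and the domain $F_i/F_{i-1}$ and codomain $\Omega^1_{X/S}\otimes(F_{i+1}/F_i)$ of $\alpha_i$ in \eqref{homomorphism} coincide, making the assertion ``$\alpha_i = \id{}$'' meaningful and the isomorphism condition automatic. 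I would state this matching of source and target explicitly, after which the verification of both defining conditions is immediate from Theorem \ref{thm:1}.
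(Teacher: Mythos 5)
Your proposal is correct and matches the paper's route exactly: the paper derives Corollary \ref{cor:1} directly from Theorem \ref{thm:1} with no further argument, and your verification of the two defining clauses (the filtration reaching all of $J^{k-1}_{X/S}(E)$ via \eqref{omegafiltration}, and the $\alpha_i$ being isomorphisms) is precisely the intended content. Your added remark on why relative dimension one makes the source and target of $\alpha_i$ coincide is consistent with the paper's own use of that hypothesis in the proof of Theorem \ref{thm:1}.
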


We want to consider the set of all equivalent relative  differential operators of order $k$ from 
$E$ to $Sym^k \Omega^1_{X/S} \otimes E$ whose symbol
is the identity automorphism of $E$.
First we define the equivalence of two relative differential operators. 

Let us denote such differential operator by a triple
$(E, P, \sigma_k(P) = \id{E})$.

Let $(E_i, P_i, \sigma_k(P_i) = \id{E_i})$ be the two triples for $i = 1, 2$, that is, 
$E_1$ and $E_2$ are two holomorphic vector bundles over $X\xrightarrow{\pi} S$ and  $$P_i \in \coh{0}{X}{ \DIFF[k]{S}{E_i} {Sym^k\Omega^1_{X/S}\otimes E_i}}$$ for $i = 1, 2$, two relative differential operators of order 
$k$ with symbol $\sigma_k(P_i) = \id{E_i}$.

We say that $(E_1, P_1, \sigma_k(P_1) = \id{E_1})$ is
equivalent to $(E_2, P_2, \sigma_k(P_2) = \id{E_2})$ or
$P_1$ is equivalent to $P_2$ if there is a holomorphic isomorphism $T : E_1\rightarrow E_2$ such that the following diagram
\begin{equation}
 \label{cd:7}
 \xymatrix{ E_1\ar[d]^{T} \ar[r]^{P_1} & Sym^k\Omega^1_{X/S}\otimes E_1\ar[d]^{\id{Sym^k\Omega^1_{X/S}}\otimes T}\\
 E_2 \ar[r]^{P_2}& Sym^k\Omega^1_{X/S}\otimes E_2\\}
 \end{equation}
commutes.

Let $\mathfrak{Diff}_k(X/S)$ be the set of all equivalent triples $(E, P, \sigma_k(P) = \id{E})$.

Note that equivalent relative differential operators 
will produce equivalent relative opers, and therefore,
in view of Corollary \ref{cor:1}, we get a map 
\begin{equation}
\label{eq:17}
\Upsilon : \mathfrak{Diff}_k(X/S) \longrightarrow \mathfrak{Op}^{SFF}_k(X/S)
\end{equation}
defined by sending 
$(E, P, \sigma_k(P) = \id{E})$ to $(J^{k-1}_{X/S}(E), \nabla_P, \{\cat{K}_i\})$.
Our aim is to show that $\Upsilon$ is a bijective map.

\section{Bijective correspondence between $\mathfrak{Diff}_k(X/S)$ and  $\mathfrak{Op}^{SFF}_k(X/S)$}
\label{Bij}
In this section we show that the two sets $\mathfrak{Diff}_k(X/S)$ and  $\mathfrak{Op}^{SFF}_k(X/S)$ are in bijective correspondence, that is, the map 
$\Upsilon$ defined in \eqref{eq:17} is a bijective map.
Strategy is to construct a map from $\mathfrak{Op}^{SFF}_k(X/S)$ to $\mathfrak{Diff}_k(X/S)$ and then show that it is inverse of $\Upsilon$.
 
 \begin{theorem}
 \label{thm:2}
Let $\pi: X \to S$ be a surjective  holomorphic proper
submersion of complex manifolds of relative dimension $
1$. Let $(E, \nabla, E^F_{\bullet})$ be the relative oper associated to the 
second fundamental form $\beta_{X/S}(F, \nabla)$.
Then, there exists a relative differential operator
\begin{equation}
\label{eq:18}
P_{\nabla} : \cat{Q} \longrightarrow Sym^k \Omega^1_{X/S} \otimes \cat{Q}.
\end{equation}  
of order $k$ such that $\sigma_k (P_\nabla) = \id{\cat{Q}}$, where $\cat{Q}= E/F_{k-1}$, and $F_{k-1}$
is the last subbundle in the filtration $E^F_{\bullet}$
in \eqref{eq:1}.
 \end{theorem}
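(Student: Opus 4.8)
The theorem is the converse to Theorem~\ref{thm:1}/Corollary~\ref{cor:1}: given a relative oper arising from a second fundamental form, I must manufacture a differential operator of order $k$ on the quotient $\cat{Q} = E/F_{k-1}$ with symbol the identity. Let me think about what structure I have to work with.

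I have the filtration $0 = F_0 \subset F_1 = F \subset F_2 \subset \cdots \subset F_k = E$, with $k-1$ isomorphisms $\alpha_i : F_i/F_{i-1} \xrightarrow{\sim} \Omega^1_{X/S} \otimes (F_{i+1}/F_i)$, and a connection $\nabla$ mapping $F_i$ into $F_{i+1} \otimes \Omega^1_{X/S}$. The quotient $\cat{Q} = E/F_{k-1} = F_k/F_{k-1}$ is the top graded piece.

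Let me reverse-engineer the target. In Theorem~\ref{thm:1}, the oper is built from $J^{k-1}_{X/S}(E)$ with $F = Sym^{k-1}\Omega^1 \otimes E$, and there the top quotient is $F_k/F_{k-1} = E$ itself. So in that model $\cat{Q} = E$ and the recovered operator should be the original $P$. This tells me the strategy: the successive isomorphisms $\alpha_i$ let me identify each graded piece $F_i/F_{i-1}$ with $Sym^{k-i}\Omega^1 \otimes \cat{Q}$ (reading off powers of $\Omega^1$), and the connection $\nabla$ plays the role of $\nabla_P$.

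**The plan.**

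First I would use the isomorphisms $\alpha_i$ to produce canonical identifications $F_i/F_{i-1} \cong Sym^{k-i}\Omega^1_{X/S} \otimes \cat{Q}$ for each $i = 1, \dots, k$, working downward from $i = k$ where $F_k/F_{k-1} = \cat{Q}$ by definition. Concretely, $\alpha_{k-1} : F_{k-1}/F_{k-2} \xrightarrow{\sim} \Omega^1 \otimes (F_k/F_{k-1}) = \Omega^1 \otimes \cat{Q}$ identifies the next piece down, and iterating $\alpha_i$ tensors up one more power of $\Omega^1$ each time; since in relative dimension one $\Omega^1_{X/S}$ is a line bundle, $Sym^{k-i}\Omega^1$ is again a line bundle and these identifications are clean. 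This recovers the graded bundle $\mathrm{gr}(E) \cong \bigoplus_{i=1}^k Sym^{k-i}\Omega^1 \otimes \cat{Q}$, matching the jet-bundle shape in \eqref{omegafiltration}.

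Second, I would show that this data reconstructs a splitting $\cat{Q} \to J^{k-1}_{X/S}(\cat{Q}) \to \cdots$, or more directly an isomorphism $E \cong J^{k-1}_{X/S}(\cat{Q})$ (at least at the graded level) under which $\nabla$ corresponds to a connection of the type produced in Proposition~\ref{prop:3}. The key input is Proposition~\ref{prop:1}/\eqref{eq:8.1}: a $k$-th order operator $\cat{Q} \to Sym^k\Omega^1 \otimes \cat{Q}$ with symbol $\id{\cat{Q}}$ is the same as a splitting of the jet sequence \eqref{eq:3} for $\cat{Q}$. So my real task is to build a morphism $\phi : J^k_{X/S}(\cat{Q}) \to Sym^k\Omega^1 \otimes \cat{Q}$ splitting the sequence. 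I would construct this by assembling the connection $\nabla$ together with the graded identifications: the connection gives, via its iterates along the filtration, the comparison maps between successive jets, and the isomorphisms $\alpha_i$ guarantee these are isomorphisms on graded pieces, hence splittings. Then $P_\nabla$ is defined as the $k$-th order operator corresponding to $\phi$ under \eqref{eq:8.1}, and the fact that $\phi$ splits the symbol sequence \eqref{eq:6} forces $\sigma_k(P_\nabla) = \id{\cat{Q}}$.

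**The main obstacle.**

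The hard part will be the second step: showing that the filtration together with the isomorphisms $\alpha_i$ genuinely reconstruct the jet bundle $J^{k-1}_{X/S}(\cat{Q})$ and a compatible splitting, rather than merely reproducing the correct associated graded pieces. The filtration data and the $\alpha_i$ determine the graded object canonically, but recovering the operator requires lifting this to the non-split level — i.e., tracking how $\nabla$ moves actual sections of $F_i$ (not just classes modulo $F_{i-1}$) into $F_{i+1} \otimes \Omega^1$, and checking these lifts are coherent enough to define a single global order-$k$ operator. I expect to handle this by an explicit local computation in a chart $(U,\phi = (z_1,\dots,z_{l+n}))$ where $\Omega^1_{X/S}|_U$ is trivialized by $dz_1$: choosing local lifts of a section $\bar{s}$ of $\cat{Q}$ successively through the filtration via $\nabla$ and the $\alpha_i^{-1}$, I read off the Taylor-type coefficients that constitute $P_\nabla(\bar{s})$, then verify independence of the chosen lifts and of the coordinate (the latter mirroring the coordinate-independence checks for the jet module structure in Section~\ref{Rel_jet}). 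The $\struct{X}$-linearity failure of $\nabla$ (its Leibniz rule) is exactly what produces the lower-order terms and ensures the leading symbol is $\id{\cat{Q}}$; confirming that the Leibniz corrections close up to give a well-defined order-$k$ operator, rather than something of indeterminate order, is the crux.
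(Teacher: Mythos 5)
Your overall target is the right one --- an isomorphism $E \cong J^{k-1}_{X/S}(\cat{Q})$ (equivalently a splitting of the $k$-th jet sequence \eqref{eq:3} for $\cat{Q}$), from which $P_\nabla$ with $\sigma_k(P_\nabla)=\id{\cat{Q}}$ falls out via \eqref{eq:8.1} exactly as you say, and your first step (using the $\alpha_i$ to identify the graded pieces $F_i/F_{i-1}$ with $Sym^{k-i}\Omega^1_{X/S}\otimes\cat{Q}$) is correct. But the step you yourself flag as ``the crux'' is genuinely missing, and it is precisely the content of the theorem. The missing idea is a \emph{canonical}, choice-free passage from the oper data to $k$-jets of $\cat{Q}$. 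The paper supplies this with flat sections: for $v\in E_x$ one takes the unique local $\nabla$-flat section $s_v$ with $s_v(x)=v$, projects it to $\cat{Q}$, and defines $\eta_j(v)\in J^{j}_{X/S}(\cat{Q})_x$ as its $j$-th order jet at $x$ (using the infinitesimal-neighbourhood description of $J^j_{X/S}$). There is nothing to check about independence of lifts or coordinates because the flat section through $v$ is unique. One then proves $\eta_{k-1}$ is an isomorphism by a rank count together with an induction in which the isomorphisms $\alpha_i$ force $\eta_{k-1}(v)=0\Rightarrow v=0$; the operator is the one corresponding to the splitting $\eta_k\circ\eta_{k-1}^{-1}$.

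Your sketched substitute --- ``choosing local lifts of a section $\bar{s}$ of $\cat{Q}$ successively through the filtration via $\nabla$ and the $\alpha_i^{-1}$, reading off Taylor-type coefficients, then verifying independence of the lifts and the chart'' --- is not yet a construction. As stated it is ambiguous ($\nabla$ moves $F_i$ \emph{up} to $F_{i+1}\otimes\Omega^1_{X/S}$, while your lifts must descend the filtration), and the lift-independence you promise to verify is exactly where the proof lives. A downward route can be made to work, but only if you specify the normalization: a section $\bar{s}$ of $\cat{Q}$ admits a unique lift $\tilde{s}\in E$ modulo nothing (i.e.\ a genuine unique lift) characterized by the successive vanishing of the images of $\nabla\tilde{s}$ in $\Omega^1_{X/S}\otimes(E/F_{k-1})$, then in $\Omega^1_{X/S}\otimes(F_{k-1}/F_{k-2})$, and so on, each step using that $\alpha_{k-1},\alpha_{k-2},\dots$ are isomorphisms to solve uniquely for the correction; one then sets $P_\nabla(\bar{s})$ to be the image of $\nabla\tilde{s}\in\Omega^1_{X/S}\otimes F_1$ under the $\alpha$-identifications, and checks the order and symbol by the Leibniz rule. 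Without either that normalization or the paper's flat-section device, your proposal does not determine a well-defined operator, so as written there is a gap at the central step.
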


To show above theorem we will use  another description of 
the relative jet bundle given by using $k$-th infinitesimal neighbourhoods.

Let $\pi : X \longrightarrow S$ be as in the Theorem \ref{thm:2}.
Let  $\Delta \subset X\times_S X$  be the diagonal  as a closed complex submanifold of $X \times_S X$, and 
$\cat{I}$ the defining ideal sheaf of $\Delta$.
For each $k \geq 0$, the $k$-th infinitesimal neighbourhood of $\Delta$ in $X \times_S X$ is defined to be the complex analytic space 
$$\Delta^{(k)}_{X/S} := (\Delta, \struct{X \times_S X}/ \cat{I}^{k+1}).$$
We can view $\struct{\Delta^{(k)}_{X/S}}$ as a sheaf of 
$\struct{X}$-algebras in a natural way, that is, 
considering $\Delta^{(k)}_{X/S}$ as an analytic space over $X$ via the following morphism 
\begin{equation*}
\Delta^{(k)}_{X/S} \xrightarrow{\delta_k} X \times_S X \xrightarrow{\text{pr}_1} X,
\end{equation*}
where $\delta_k$ is arising from the fact that we have natural projection $\struct{X \times_S X} \longrightarrow \struct{X \times_S X}/ \cat{I}^{k+1}$.
As $\psi := \text{pr}_1 \circ \delta_k$  is the identity 
on the underlying spaces, $\psi^{-1} \struct{X} = \struct{X}$, $\psi_* \struct{\Delta^{(k)}_{X/S}} = \struct{\Delta^{(k)}_{X/S}}$ and $\psi^* \struct{X} = 
\struct{\Delta^{(k)}_{X/S}} $. We thus obtain a map of 
sheaves 
$$\text{pr}^*_1 : \struct{X} \longrightarrow \struct{\Delta^{(k)}_{X/S}}.$$
On the stalk level the morphism $\psi_x$ is the following composition
\begin{equation*}
\sstalk{X}{x} \xrightarrow{\text{pr}^{\sharp}_1} \sstalk{X \times_S X}{(x,x)} \xrightarrow{\delta^\sharp_k} \sstalk{X \times_S X}{(x,x)}/ \cat{I}^{k+1}_{(x,x)} = \sstalk{\Delta^{(k)}_{X/S}}{x}.
\end{equation*}

By the \textbf{relative jet} of order $k$ over $X/S$, denoted by $J^k_{X/S}$, we mean the structure sheaf of 
$\Delta^{(k)}_{X/S}$ viewed as a sheaf of $\struct{X}$-algebras via the map $\text{pr}^*_1 : \struct{X} \longrightarrow \struct{\Delta^{(k)}_{X/S}}$, that is,
\begin{equation}
\label{eq:19}
J^k_{X/S} := \struct{X \times_S X}/ \cat{I}^{k+1},
\end{equation}
together with its sheaf of $\struct{X}$-algebras structure described above.

Now for a vector bundle $E$ over $X$, we define
\begin{equation}
\label{eq:20}
J^k_{X/S}(E) := J^k_{X/S} \otimes_{\struct{X}} E
\end{equation}
as $\struct{X}$-module.

 Therefore, one can realize the section of $J^k_{X/S}(E)$ as  a section of $E$ restricting to the $k$-th order infinitesimal neighbourhood.
 
 \begin{proof}[\bf Proof of Theorem \ref{thm:2}]
 Let $(E, \nabla, E^F_{\bullet}) \in \mathfrak{Op}^{SFF}_k(X/S)$ be the relative oper associated to the 
second fundamental form $\beta_{X/S}(F, \nabla)$.
Note that $F_k = E$ and $F_{k-1}\subsetneq E$. Set $$\cat{Q} := \frac{E}{F_{k-1}}$$ to be the final quotient in the filtration  $E^F_{\bullet}$  in (\ref{eq:1}).
Let $p : E\rightarrow \cat{Q}$ be the natural projection. Take a point $x
\in X$ and a vector $v\in E_x$ in the fibre of $E$ over $x$. Let $U$ be an open subset of $X$ containing $x$, 
and $s_v : U \to E$ be a holomorphic section satisfying 
two conditions
\begin{enumerate}
\item $s_v (x) = v$
\item $\nabla (s_v) = 0$, i.e., $s_v$ is flat with respect to the connection $\nabla$ on $E$.
\end{enumerate}
Then $p(s_v)$ is a holomorphic section of $\cat{Q}$ defined around $x$.
Now, restricting $p(s_v)$ to the $k$-th order infinitesimal neighbourhood of $x$, we get an element $\eta_k(v)\in J^k_{X/S}(\cat{Q})_x$ in the fibre of $J^k_{X/S}(\cat{Q})$ over $x$.

Define a homomorphism for $j \geq 0$
\begin{equation}\label{homomorphismeta}
\eta_j : E\rightarrow J^j_{X/S}(\cat{Q})
\end{equation}
by sending any $v$ to the corresponding element $\eta_j(v)$ as above.


Now, we show that the 
homomorphism 
\begin{equation}\label{isomorphism}
\eta_{k-1} : E\rightarrow J^{k-1}_{X/S}(\cat{Q})
\end{equation}
is an isomorphism.
Note that rank $\rk{J^{k-1}_{X/S}(\cat{Q})} = k\hspace{.1 cm} \rk{\cat{Q}}.$ 
Since the given relative oper $(E, \nabla, E^F_{\bullet})$ has length $k$, it follows that $k \hspace{.1 cm}\rk{\cat{Q}} = \rk{E}.$ Thus, $\rk{E} = \rk{J^{k-1}_{X/S}(\cat{Q})}.$
Therefore, to show that $\eta_{k-1}$ is an isomorphism, it is enough to show that for any $x\in X$, $v\in E_x \setminus\{0\}$, $\eta_{k-1}(v) = 0$ implies $v = 0$.

By the  interval $[0, k-1]$, we mean the integer values from $0$ to $k-1$. First, we show the following assertion using induction on $i$:

If $\eta_i (v) = 0$ for some $i\in [0, k-1]$, then $v\in (F_{k-i-1})_x\subset E_x$ where $F_{k-i-1}$ is the subbundle of $E$ in filtration (\ref{eq:1}) and the homomorphism $\eta_i$ as defined in (\ref{homomorphismeta}).

For $i = 0$, we have  $\eta_0 : E \to \cat{Q}$. Therefore, $\eta_0 (v) = 0$ will imply that 
$v \in (F_{k-1})_x$. Thus, the assertion is true for 
$i = 0$.
 
Suppose that $\eta_i(v) = 0$ for $i = 0, 1$, and assertion is true for $i = 0$, then we show that assertion is true for $i = 1$.
Since $\eta_0(v) = 0$ and assertions is true for $i = 0$, $v \in (F_{k-1})_x$. 
Let $v_1 \in \frac{(F_{k-1})_x}{(F_{k-2})_x}$ be the image of $v$ by the natural projection $(F_{k-1})_x\rightarrow\frac{(F_{k-1})_x}{(F_{k-1})_x}$. The condition $\eta_1(v) = 0$ implies that image
$$\alpha_{k-1}(v_1) \in (\Omega^1_{X/S}\otimes F_k/F_{k-1})_x$$
is zero,
where $\alpha_{k-1}$ is in \eqref{eq:1}.
Since $\alpha_{k-1}$ is an isomorphism, we get 
$v_1 = 0$. Therefore, 
$v \in (F_{k-2})_x \subset E_x$. Thus, the assertion 
is true for $i = 1$.

Next, suppose that $\eta_i(v) = 0$, for  $0\leq i \leq (n+1)$ and assertion is true for all $i \leq n$. Using the similar steps as above, we show that assertion is true for $i = n+1$.
Since assertion is true for $i = n$, we get 
 $v\in (F_{k-n-1})_x \subset E_x.$

Let $v_n\in \frac{(F_{k-n-1})_x}{(F_{k-n-2})_x}$ be the image of $v$ by the natural projection 
$$(F_{k-n-1})_x\rightarrow \frac{(F_{k-n-1})_x}{(F_{k-n-2})_x}.$$
The condition $\eta_{n+1}(v) = 0$ implies that the image of $v_n$ under 
\begin{equation}
 \label{eq:21}
  \alpha_{k-n-1} : \frac{F_{k-n-1}}{F_{k-n-2}}  \longrightarrow \Omega^1_{X/S}\otimes\frac{F_{k-n}}{F_{k-n-1}}
 \end{equation}
 is zero, that is,  $\alpha_{k-n-1}(v_n) = 0$.

Since $\alpha_{k-n-1}$ is an isomorphism, this implies that $v_n = 0$. Therefore, $v \in (F_{k-n-2})_x\subset E_x$. In other words, the assertion is true for $i = n+1$.

Note that as $F_0 = 0$, setting $i = k-1$ in the above assertion we conclude that $$\eta_{k-1}(v) = 0 \Longrightarrow v = 0.$$
Thus, $\eta_{k-1}$ in \eqref{isomorphism} is an isomorphism.

Since $\eta_{k-1}$ is an isomorphism, 
 consider the homomorphism
$$\eta_k\circ \eta^{-1}_{k-1} : J^{k-1}_{X/S}(\cat{Q})\longrightarrow J^k_{X/S}(\cat{Q})$$
which is an splitting of the following jet bundle exact sequence 
\begin{equation*}
0\rightarrow Sym^k\Omega^1_{X/S}\otimes \cat{Q}\xrightarrow{\iota} J^k_{X/S}(\cat{Q})\xrightarrow{p^k_\cat{Q}} J^{k-1}_{X/S}(\cat{Q})\rightarrow 0.
\end{equation*}
The above splitting gives a homomorphism of vector bundles
\begin{equation}
\label{eq:22}
\widetilde{P}_\nabla : J^k_{X/S}(\cat{Q})\rightarrow Sym^k\Omega^1_{X/S}\otimes \cat{Q},
\end{equation}
such that $$\iota \circ \widetilde{P}_{\nabla} = \id{J^k_{X/S}(\cat{Q})}.$$
Since $\DIFF[k]{S}{\cat{Q}}{Sym^k\Omega^1_{X/S}\otimes \cat{Q}} \cong \HOM[\struct{X}]{J^k_{X/S}(\cat{Q})}{Sym^k\Omega^1_{X/S}\otimes \cat{Q}}$, we get a relative differential operator
\begin{equation}\label{eq:23}
P_\nabla : \cat{Q} \rightarrow Sym^k\Omega^1_{X/S}\otimes \cat{Q}
\end{equation}
of order $k$ such that $\sigma_k(P_\nabla) = \id{\cat{Q}}.$

This completes the proof of the theorem.
\end{proof}

\begin{remark}
\label{rm:2}
Under the assumption of  above Theorem \ref{thm:2}, we also get the following:
\begin{enumerate}
\item $\eta_{k-1}(F_i) = \mathcal{K}_i$ for each $i\in [0, k-1]$, where $F_i$'s are terms in the filtration (\ref{eq:1}).
\item There is an isomorphism
$$\overline{\eta}_i : \frac{E}{F_i}\rightarrow J^{k-1-i}_{X/S}(\cat{Q})$$
such that the following diagram
\begin{equation}
 \label{cd:8}
 \xymatrix{ E \ar[r]^{\eta_{k-1}}\ar[d] & J^{k-1}_{X/S}(\cat{Q}) \ar[d] \\
 E/F_i \ar[r]^{\overline{\eta}_i} & J^{k-1-i}_{X/S}(\cat{Q}) \\}
 \end{equation}
where $J^{k-1}_{X/S}(\cat{Q}) \rightarrow J^{k-1-i}_{X/S}(\cat{Q})$ is the projection.
\end{enumerate}
\end{remark}

It is easy to see that the equivalent relative opers will produce equivalent relative differential operators, so in view of Theorem \ref{thm:2}, we have a  map
\begin{equation}
 \label{eq:24}
 \Phi :  \mathfrak{Op}^{SFF}_k(X/S) \longrightarrow \mathfrak{Diff}_k(X/S) 
 \end{equation}
 defined by sending the triple $(E, \nabla, E^F_{\bullet})$ to the triple $(\cat{Q}, P_\nabla, \sigma_k(P_\nabla) = \id{\cat{Q}})$,
where $P_\nabla$ is constructed in (\ref{eq:23}) is a relative differential operator on $\cat{Q}$ of order $k$ such that $\sigma_k(P_\nabla) = \id{\cat{Q}}.$

\begin{theorem}
\label{thm:3}
Let $\pi : X\rightarrow S$ be a surjective holomorphic proper submersion of relative dimension $1$. Then the two maps $\Upsilon$ and $\Phi$ defined in (\ref{eq:17}) and (\ref{eq:24}) respectively,  are inverses of each other, that is 
\begin{equation}\label{eq:4.14}
\Phi \circ\Upsilon = \id{\mathfrak{Diff}_k(X/S)},
\end{equation}
\begin{equation}\label{eq:4.15}
\Upsilon \circ \Phi = \id{\mathfrak{Op}^{SFF}_k(X/S)}.
\end{equation}
\end{theorem}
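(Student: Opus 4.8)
The plan is to verify the two composition identities $\Phi \circ \Upsilon = \id{\mathfrak{Diff}_k(X/S)}$ and $\Upsilon \circ \Phi = \id{\mathfrak{Op}^{SFF}_k(X/S)}$ separately, tracing each object through the explicit constructions that produced $\Upsilon$ (Proposition \ref{prop:3}, Theorem \ref{thm:1}, Corollary \ref{cor:1}) and $\Phi$ (Theorem \ref{thm:2}). Since both maps are defined on equivalence classes, I first note that it suffices to work with representatives and to exhibit, in each case, a holomorphic isomorphism witnessing the required equivalence; naturality of all the constructions (jet bundles, splittings, second fundamental forms) in the bundle means these isomorphisms will automatically intertwine the relevant data, so the bulk of the argument is identifying the correct isomorphism and checking one commuting square.

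For \eqref{eq:4.15}, I would start with a relative oper $(E, \nabla, E^F_\bullet)$ associated to $\beta_{X/S}(F,\nabla)$ of length $k$. Applying $\Phi$ yields the triple $(\cat{Q}, P_\nabla, \id{\cat{Q}})$ with $\cat{Q} = E/F_{k-1}$, and applying $\Upsilon$ then produces the oper $(J^{k-1}_{X/S}(\cat{Q}), \nabla_{P_\nabla}, \{\cat{K}_i\})$. The key claim is that the isomorphism $\eta_{k-1} : E \to J^{k-1}_{X/S}(\cat{Q})$ from \eqref{isomorphism} is precisely the equivalence of relative opers I need. Here Remark \ref{rm:2}(1) does the heavy lifting: it already records that $\eta_{k-1}(F_i) = \cat{K}_i$, so $\eta_{k-1}$ carries the SFF-filtration of $E$ onto the jet-bundle filtration \eqref{eq:16}. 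What remains is to check that $\eta_{k-1}$ intertwines $\nabla$ with $\nabla_{P_\nabla}$, i.e. that the square \eqref{cd:2} commutes. The cleanest route is to recall that, by construction in Theorem \ref{thm:2}, $\eta_j$ sends a vector $v$ to the $j$-jet of $p(s_v)$ for a $\nabla$-flat local section $s_v$; thus $\nabla$-flat sections of $E$ correspond under $\eta_{k-1}$ to the sections of $J^{k-1}_{X/S}(\cat{Q})$ that are flat for the connection obtained by splitting, which is exactly $\nabla_{P_\nabla}$. Matching flat sections on both sides forces the connections to agree, giving the commuting square.

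For \eqref{eq:4.14}, I would begin with a triple $(E, P, \id{E})$, apply $\Upsilon$ to obtain $(J^{k-1}_{X/S}(E), \nabla_P, \{\cat{K}_i\})$, and then apply $\Phi$. By Theorem \ref{thm:1} the last subbundle $F_{k-1}$ of the SFF-filtration of this oper is $\cat{K}_{k-1}$, and from the explicit description \eqref{eq:16.1} together with \eqref{omegafiltration} the final quotient is $J^{k-1}_{X/S}(E)/\cat{K}_{k-1} \cong E$ (the rank-one bottom piece $J^0_{X/S}(E) = E$). So $\Phi$ returns a triple $(\cat{Q}', P_{\nabla_P}, \id{\cat{Q}'})$ with a canonical identification $\cat{Q}' \cong E$, and I must show $P_{\nabla_P}$ is equivalent to the original $P$ under this identification. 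This is the point where the two constructions must be seen to be mutually inverse at the level of the splitting data: $\Upsilon$ turns the splitting $\phi_P$ of \eqref{eq:3} into the connection $\nabla_P$, while $\Phi$ reconstructs a differential operator from $\nabla_P$ via the splitting $\widetilde{P}_\nabla$ coming from $\eta_k \circ \eta_{k-1}^{-1}$; I would argue that under the identification $\cat{Q}' \cong E$ these splittings coincide, so the recovered operator equals $P$.

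I expect the main obstacle to be the verification in \eqref{eq:4.14} that the operator $P$ is genuinely recovered, rather than merely an operator with the same symbol. Equality of symbols is immediate since both are $\id{E}$, but the full operator encodes the lower-order data carried by the splitting $\phi_P$, and one must confirm that passing through $\nabla_P$ and then through the infinitesimal-neighbourhood construction of $\eta_k, \eta_{k-1}$ does not lose or alter this information. The honest way to handle this is to unwind the definition of $\eta_j$ in terms of $\nabla_P$-flat sections of $J^{k-1}_{X/S}(E)$ and compare, via the functoriality diagram \eqref{cd:3} and the map $\theta$ of \eqref{eq:4}, the resulting splitting with $\phi_P$; because every intervening construction is canonical and natural in the bundle, these splittings must match on the nose. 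Once both splittings are identified, the two identities \eqref{eq:4.14} and \eqref{eq:4.15} follow, and hence $\Upsilon$ and $\Phi$ are mutually inverse bijections.
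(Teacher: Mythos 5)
Your proposal follows essentially the same route as the paper: for \eqref{eq:4.15} you take $\eta_{k-1}$ from \eqref{isomorphism} as the oper equivalence (with Remark \ref{rm:2} handling the filtrations) and check the connections agree, and for \eqref{eq:4.14} you use the identification $J^{k-1}_{X/S}(E)/\cat{K}_{k-1}\cong E$ and unwind the two splitting constructions to recover $P$, which is exactly what the paper does (its own verification is equally brief, deferring to ``the same steps'' of Theorems \ref{thm:1} and \ref{thm:2}). Your flat-section argument for why $\eta_{k-1}$ intertwines $\nabla$ with $\nabla_{P_\nabla}$ is in fact a slightly more explicit justification than the paper provides, and is sound since relative flat sections locally frame the bundle in relative dimension one.
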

\begin{proof}
To show (\ref{eq:4.14}), let $(E, P, \sigma_k(P) = \id{E})\in \mathfrak{Diff}_k(X/S)$. Then applying $\Upsilon$ on it, from Corollary
\ref{cor:1}, we get a relative oper  
$(J^{k-1}_{X/S}(E), \nabla_P, \{\cat{K}_i\})$  associated to the second fundamental form
$\beta_{X/S}(Sym^{k-1}\Omega^1_{X/S}\otimes E, \nabla_P)$, where $\nabla_P$ is the relative holomorphic connection on $J^{k-1}_{X/S}(E)$ arising from $P$ in Proposition \ref{prop:3}, and $\cat{K}_i$'s are the terms of the filtration in \eqref{eq:16}.

Now, applying $\Phi$ on $(J^{k-1}_{X/S}(E), \nabla_P, \{\cat{K}_i\})$  gives a relative differential operator
$P_{\nabla_P}$ on $\cat{Q} : = J^{k-1}_{X/S}(E)/\mathcal{K}_{k-1}$ such that $\sigma_k(P_{\nabla_P}) = \id{\cat{Q}}$.

Since $\cat{Q} := J^{k-1}_{X/S}(E)/\mathcal{K}_{k-1} \simeq E$, we get the $P_{\nabla_P}$ on $E$. Now using the same steps in Theorem \ref{thm:2}, we conclude that 
$P$ and $P_{\nabla_P}$ coincide.

Next to show (\ref{eq:4.15}), let $(E, \nabla, E^F_{\bullet}) \in \mathfrak{Op}^{SFF}_k(X/S)$. Now, 
applying $\Phi$ on it, from Theorem \ref{thm:2}, we get 
the triple $(\cat{Q}, P_\nabla, \sigma_k(P_\nabla) = \id{Q}) \in \mathfrak{Diff}_k(X/S)$,  where $\cat{Q} = \frac{E}{F_{k-1}}$ Apply $\Upsilon$ on 
the later triple, we get the triple  $(J^{k-1}_{X/S}(\cat{Q}), \nabla_{P_\nabla}, \cat{Q}^{\cat{K}_1}_{\bullet}) \in \mathfrak{Op}_k^{SFF}(X/S)$.
In the proof of the  Theorem \ref{thm:2}, from 
\eqref{isomorphism}, we have  $J^{k-1}_{X/S}(\cat{Q}) \simeq E$. Using the steps similar to the Theorem \ref{thm:1}, we get that $\nabla$ coincides with 
$\nabla_{P_\nabla}$ and filtration $E^F_\bullet$ coincides with the filtration $\cat{Q}^{\cat{K}_1}_\bullet$. This completes the proof.
\end{proof}

\section*{acknowledgements}
The authors would like to thank referees for their detailed and helpful comments.

\end{document}